\documentclass[pdflatex,sn-mathphys-num]{sn-jnl}%

\usepackage{graphicx}%
\usepackage{multirow}%
\usepackage{amsmath,amssymb,amsfonts}%
\usepackage{amsthm}%
\usepackage{mathrsfs}%
\usepackage[title]{appendix}%
\usepackage{xcolor}%
\usepackage{textcomp}%
\usepackage{manyfoot}%
\usepackage{booktabs}%
\usepackage{algorithm}%
\usepackage{algorithmicx}%
\usepackage{algpseudocode}%
\usepackage{listings}%
\usepackage{epstopdf}

\theoremstyle{thmstyleone}%
\newtheorem{theorem}{Theorem}
\newtheorem{lemma}{Lemma}
\newtheorem{corollary}{Corollary}

\theoremstyle{thmstyletwo}%

\theoremstyle{thmstylethree}%

\begin{document}

\title[Parallel packing a square with isosceles right triangles and equilateral triangles]{Parallel packing a square with isosceles right triangles and equilateral triangles}

\author*[1,2]{\fnm{Chen-Yang} \sur{Su}}\email{suchenyang@tjnu.edu.cn}

\affil*[1]{\orgdiv{College of Mathematical Sciences}, \orgname{Tianjin Normal University}, \orgaddress{\street{Xiqing}, \postcode{300387}, \state{Tianjin}, \country{China}}}

\affil[2]{\orgdiv{Institute of Mathematics and Interdisciplinary Sciences}, \orgname{Tianjin Normal University}, \orgaddress{\street{Xiqing}, \postcode{300387}, \state{Tianjin}, \country{China}}}

\abstract{Suppose that $I$ is a unit square. Let $T$ (resp. $\Delta$) be an isosceles right triangle (resp. an equilateral triangle). We prove that any collection of triangles homothetic to $T$ (resp. $\Delta$), whose  total area does not exceed $\frac{1}{2}$ (resp. $\frac{\sqrt{3}}{4}$), can be parallel packed into $I$. These upper bounds are tight.}

\keywords{parallel packing, isosceles right triangle, equilateral triangle, square}

%%\pacs[JEL Classification]{D8, H51}

\pacs[MSC Classification]{52C15, 05B40}

\maketitle

\section{Introduction}\label{sec1}

Let $D, C_{1}, C_{2}, \ldots$ be plane convex bodies. We say that the collection $\{C_{n}\}$ permits a translative covering of $D$ if there exist translations $\tau_{n}$ such that $D\subset \bigcup\tau_{n}C_{n}$. We say that the convex bodies $C_{1}, C_{2}, \ldots$ are \emph{translatively packed} into $D$ if there exist translations $\tau_{n}$ such that $\tau_{n}C_{n}$ are subsets of $D$ and that they have pairwise disjoint interiors.
One side of a polygon $P$ is called the \emph{base} of $P$. A packing of a polygon $P$ with the polygons $P_{1}, P_{2}, \ldots$ is called \emph{parallel} if each $\tau_{n}P_{n}$ has a side parallel to the base of $P$.

Moon and Moser~\cite{J.W} showed that any collection of squares whose total area is not greater than $\frac{1}{2}$ can be
translatively packed into a unit square. Januszewski~\cite{J.J2} proved that any collection of squares of diagonals parallel to a unit square $I$ whose total area does not exceed $\frac{4}{9}$ can be translatively packed into $I$. Liu and Su~\cite{LS} considered the problem of packing a rhombus with squares.
About packing a triangle with its homothetic copies, one can see~\cite{J.J, RT, J.J0}. For parallel packing triangles with squares, one can refer to~\cite{FLZ, J.J1, J.JL, J.JLS, SLL}.

Suppose that $I$ is a unit square. Let $T$ be an isosceles right triangle and let $\Delta$ be an equilateral triangle. Song and Su~\cite{SS1} proved that any collection of positive homothetic copies of $T$ with the total area not smaller than $2$ permits a covering of $I$. Januszewski and Zielonka~\cite{JZ} generalized this result to rectangles. Song and Su~\cite{SS2} showed that any collection of positive homothetic copies of $\Delta$ with the total area not smaller than $1+\frac{7\sqrt{3}}{12}$ permits a covering of $I$. All these lower bounds are tight.

In this note we consider the problem of parallel packing a square with isosceles right triangles and equilateral triangles. In the following packing, both translations and $180^{\circ}$ rotations (denote by $\lambda_{n}$) are used. Denote by $|C|$ the area of the plane convex body $C$.
Let $\varrho(D, C)$ be the greatest number such that any collection of homothetic copies of $C$ with the total area not greater than $\varrho(D, C)\cdot|C|$ can be parallel packed into $D$. In Section~\ref{sec2} we show that $\varrho(I, T) = \frac{1}{2}$ and in Section~\ref{sec3} we prove that $\varrho(I, \Delta) = \frac{\sqrt{3}}{4}$.

\section{Packing a square with isosceles right triangles}\label{sec2}

\begin{lemma}\label{lem2.1}
	Suppose that $R$ is a rectangle with side lengths $a$ and $b$. Let $T$ be an isosceles right triangle with legs parallel to the sides of $R$. Moreover, let $\{T_{n}\}$ be a collection of homothetic copies of $T$ such that $t_{1}\geq t_{2}\geq\ldots$, where $t_{n}$ denotes the leg length of $T_{n}$ for $n=1,2,\ldots$. If
	\begin{align*}
		\sum|T_{n}|\leq\frac{1}{2}\cdot t^{2}_{1}+(a-t_{1})\cdot(b-t_{1})
	\end{align*}
	and if $t_{1}<\min\{a,b\}$, then the triangles from $\{T_{n}\}$ can be parallel packed into $R$.
\end{lemma}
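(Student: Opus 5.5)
The plan is a Moon–Moser style shelf packing, with two features specific to triangles. Consecutive triangles in a layer are packed in pairs: $T_{2k-1}$ in its original orientation and $T_{2k}$ rotated by $180^{\circ}$, so that together they nearly fill a strip. The first triangle $T_1$ is the one that gets credit $\frac12 t_1^2$ instead of a full square. We argue by contradiction: assume the greedy algorithm below fails, and show that the total area already placed plus the triangle that fails to fit exceeds the bound in the hypothesis.

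\emph{Algorithm.} Let $R=[0,a]\times[0,b]$ and assume $a\ge b$ (the statement is symmetric). Place $T_1$ in the lower left corner with its legs on the sides of $R$. Its hypotenuse then cuts off only half of the square $[0,t_1]^2$. The remaining triangles are put in horizontal layers (shelves) across the bottom strip $[0,a]\times[0,t_1]$, then in further layers stacked above, as follows. A layer of height $h$ equal to the leg of its first triangle is filled from left to right by a zig-zag: triangles of nonincreasing legs $t_i,t_{i+1},\dots$ are alternately placed upright (right angle at bottom left) and rotated (right angle at top right). Each rotated triangle is slid left until its hypotenuse touches the hypotenuse of the previous upright one.

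The key estimate concerns a layer of height $h$ whose triangles have legs $s_1=h\ge s_2\ge\dots$. I claim it occupies horizontal length at most $\sum_j s_j - \sum_{\text{pairs}} s_{2j}$, roughly half the legs. I also claim the wasted area in a layer is controlled by $h\cdot(h-s_{\text{last}})$-type terms, as in the Moon–Moser square argument. With this, each full layer of width $w$ and height $h_k$ contains triangle area at least $\frac12 (w-h_k) h_{k+1}$, or something comparable. The layers after the first have heights $h_2\ge h_3\ge\dots$ bounded by $t_1$. Summing over layers, their total content exceeds $(a-t_1)(b-t_1)$ minus the contribution of the first, partially blocked layer. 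The first layer starts at $x=t_1$ next to $T_1$, so it uses the strip $[t_1,a]$; its content plus $\frac12 t_1^2$ gives the extra term. The condition $t_1<\min\{a,b\}$ guarantees each layer has positive room.

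The main obstacle is getting the right per-layer density for the triangle zig-zag. Two triangles of unequal legs $s\ge s'$ paired as above fill the box $s\times(\text{width})$ with a loss of order $s(s-s')$. Telescoping these losses against the next layer's height is where the Moon–Moser bookkeeping must be adapted. I would first try to show that a layer whose last triangle fails to fit has triangle area at least $\frac12 h_k\,(w - h_k)$. Pairing this with the height of the next layer then gives $\sum|T_n| > \frac12 t_1^2+(a-t_1)(b-t_1)$, the desired contradiction. If the factor $\frac12$ fails to compensate, I would instead pair an upright and a rotated triangle into a parallelogram of area $\ge$ half its bounding rectangle and run the rectangle-shelf argument on these units.
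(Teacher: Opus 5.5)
Your overall scheme matches the paper's: zig-zag shelves with alternating $180^{\circ}$ rotations, $T_1$ credited with $\frac{1}{2}t_1^2$, and each new layer opened by the triangle that failed to fit. But the one quantitative step that makes the lemma work is never established, and the versions you propose would not suffice. A per-layer bound such as $\frac{1}{2}(w-h_k)h_{k+1}$ or $\frac{1}{2}h_k(w-h_k)$, summed against $t_1+h_2+\cdots+h_{K+1}>b$ (with $h_1=t_1$), gives at best about $\frac{1}{2}t_1^2+\frac{1}{2}(a-t_1)(b-t_1)$, which is a factor $2$ short. Your fallback, treating upright--rotated pairs as parallelogram units of density at least $\frac{1}{2}$ inside a rectangle-shelf argument, loses the same factor $2$. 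So as written the proposal does not reach the contradiction. Measuring the loss of a pair $s\ge s'$ against the layer's own height (your ``$s(s-s')$'') is the wrong bookkeeping.

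The missing idea is to measure layer $k$ only inside its bottom strip of height $h_{k+1}$, the leg of the triangle that failed to fit there. Every leg in layer $k$ is at least $h_{k+1}$. An upright triangle $\{x\ge c,\ y\ge 0,\ x+y\le c+s\}$ together with the rotated one $\{x\le c+s,\ y\le s',\ x+y\ge c+s\}$ covers $[c,c+s]\times[0,h_{k+1}]$ exactly once $s'\ge h_{k+1}$, so there is no loss at all in that strip. A rotated triangle always fits after an upright one, so the failing triangle sits in an upright slot and $x_{\mathrm{end}}+h_{k+1}>a$, where $x_{\mathrm{end}}$ is the right end of the last placed triangle.

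Exclude the first triangle of the layer (leg $h_k$, already counted) and include the failing one. The remaining triangles of the layer cover $\{x+y\ge h_k,\ x\le x_{\mathrm{end}},\ 0\le y\le h_{k+1}\}$, of area $h_{k+1}(x_{\mathrm{end}}-h_k)+\frac{1}{2}h_{k+1}^2$. Adding $\frac{1}{2}h_{k+1}^2$ from the failing triangle gives more than $(a-h_k)h_{k+1}\ge(a-t_1)h_{k+1}$, with factor $1$. This is the paper's ``gray parallelogram'' estimate, and summing it gives $\frac{1}{2}t_1^2+(a-t_1)(b-t_1)$ directly.

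This requires the second triangle of every layer, in particular $T_2$, to be nested rotated against the hypotenuse of the first. Your description of the first layer as starting at $x=t_1$ and using only $[t_1,a]$ throws away the half-square $\{x+y\ge t_1,\ x\le t_1,\ 0\le y\le h_2\}$. The first-layer estimate then drops to $(a-t_1)h_2-\frac{1}{2}h_2^2$, for instance when all legs after $T_1$ in that layer equal $h_2$, and the argument no longer closes.
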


\begin{proof}
	Let $R:=ABCD$. We describe a method of packing $T_{1}, T_{2}, \ldots$ into $R$. The method is based on the method from~\cite{J.W} and~\cite{J.J1}. The side $AB$ of $R$ is called the base. The sides $AD$ and $BC$ of $R$ are called the left and the right side, respectively. We pack the isosceles right triangles from the collection in layers, whose bases are parallel to the base of $R$. We pack the isosceles right triangles into $R$, beginning from the left side of $R$ along the base of the layer. The base of the first layer is the base of $R$. The assumption $t_{1}<\min\{a, b\}$ guarantees that $T_{1}$ can be packed into the first layer. If an isosceles right triangle cannot be packed into $R$ in a layer, then the new layer is created directly above the first isosceles right triangle lying in the preceding layer as in Fig.~\ref{f01} (each isosceles right triangle $\lambda_{n}T_{n}$ is denoted by $T_{n}$ for short). This packing process is terminated either as long as all isosceles right triangles have been packed or as long as there is an isosceles right triangle which cannot be packed into $R$ by this method.
	
	We are going to prove the lemma indirectly. Assume on the contrary that the isosceles right triangles from $\{T_{n}\}$ cannot be packed into $R$ by the method described in the preceding paragraph.
	
	\begin{figure}[!ht]
		\centering
		\includegraphics[width=8cm]{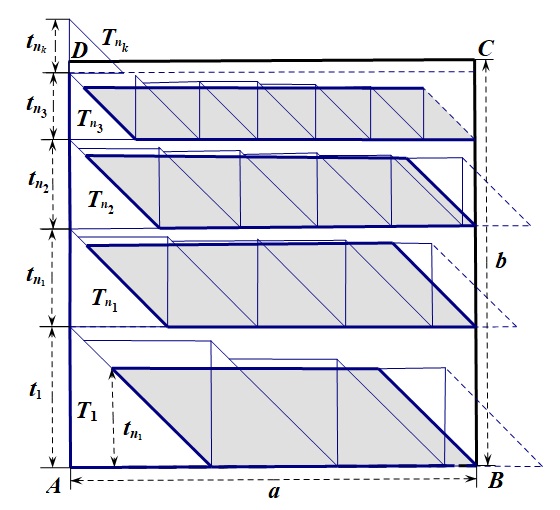}
		\caption{A packing method of the rectangle with side lengths $a$ and $b$}
		\label{f01}
	\end{figure}
	
	Denote by $k$ the number of created layers in which at least one isosceles right triangle is packed ($k = 4$ in Fig.~\ref{f01}). Denote by $T_{n_{j}}$
	the first isosceles right triangle lying in the $(j + 1)$-st layer for $j = 1, 2, \ldots, k-1$ (see Fig.~\ref{f01}, $n_{1}=7$ and $n_{2}=15$). Furthermore, let $T_{n_{k}}$
	be the isosceles right triangle which stops the packing process.
	
	We have
	\begin{align*}
		t_{1}+t_{n_{1}}+\cdots+t_{n_{k}}>b.
	\end{align*}
	
	Hence,
	\begin{equation}\label{equ1}
		t_{n_{1}}+\cdots+t_{n_{k}}>b-t_{1}.
	\end{equation}
	
	Observe that the sum $ \sum\limits_{n=2}^{n_{1}}|T_{n}|$ is greater than the area of the gray parallelogram with base length $a-t_{1}$ and with height $t_{n_{1}}$ as in Fig.~\ref{f01}. That is,
	\begin{align*}
		\sum\limits_{n=2}^{n_{1}}|T_{n}|>(a-t_{1})\cdot t_{n_{1}}.
	\end{align*}
	
	Similarly, from $t_{1}\geq t_{n_{1}} \geq t_{n_{2}}\geq\ldots\geq t_{n_{k-1}}$ we have
	
	\begin{align*}
		\sum\limits_{n=n_{1}+1}^{n_{2}}|T_{n}|>(a-t_{n_{1}})\cdot t_{n_{2}}\geq(a-t_{1})\cdot t_{n_{2}},
	\end{align*}
	
	\begin{align*}
		\sum\limits_{n=n_{2}+1}^{n_{3}}|T_{n}|>(a-t_{n_{2}})\cdot t_{n_{3}}\geq(a-t_{1})\cdot t_{n_{3}},
	\end{align*}
	
	\begin{align*}
		\ldots,
	\end{align*}
	
	\begin{align*}
		\sum\limits_{n=n_{k-1}+1}^{n_{k}}|T_{n}|>(a-t_{n_{k-1}})\cdot t_{n_{k}}\geq(a-t_{1})\cdot t_{n_{k}}.
	\end{align*}
	
	Therefore, by \eqref{equ1} we get
	\begin{align*}
		\sum\limits_{n=1}^{n_{k}}|T_{n}|&>\frac{1}{2}\cdot t^{2}_{1}+ (a-t_{1})\cdot (t_{n_{1}}+t_{n_{2}}+\cdots+t_{n_{k}})\\
		&>\frac{1}{2}\cdot t^{2}_{1}+ (a-t_{1})\cdot(b-t_{1}),
	\end{align*}
	which is a contradiction.
\end{proof}

\begin{figure}[!ht]
	\centering
	\includegraphics[width=8cm]{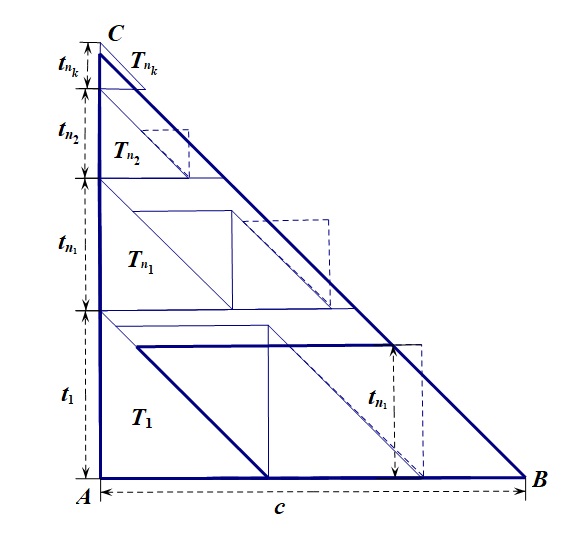}
	\caption{A packing method of the isosceles right triangle with leg length $c$}
	\label{f02}
\end{figure}

The following result can be found in~\cite{J.J0} and~\cite{RT}. Here we present an alternative method of proof.

\begin{lemma}\label{lem2.2}
	Suppose that $T^{*}$ is an isosceles right triangle with leg length $c$. Let $\{T_{n}\}$ be a collection of homothetic copies of $T^{*}$ such that $c>t_{1}\geq t_{2}\geq\ldots$, where $t_{n}$ denotes the leg length of $T_{n}$ for $n=1,2,\ldots$. If
	\begin{align*}
		\sum|T_{n}|\leq\frac{1}{2}\cdot t^{2}_{1}+\frac{1}{2}\cdot(c-t_{1})^{2},
	\end{align*}
	then the isosceles right triangles from $\{T_{n}\}$ can be parallel packed into $T^{*}$.
\end{lemma}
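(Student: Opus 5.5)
The plan is to adapt the layer method from the proof of Lemma~\ref{lem2.1} to the triangle $T^{*}$, as suggested by Fig.~\ref{f02}. Place $T^{*}$ with its right-angle vertex at the origin, its legs along the positive axes, and its hypotenuse on the line $x+y=c$. I would put $T_{1}$ into the right-angle corner of $T^{*}$. The remaining triangles are then packed in horizontal layers whose bases are parallel to the lower leg. The first layer has as its base the lower leg of $T^{*}$. Inside a layer, triangles are placed from left to right along the base of the layer, alternately in the position of $T^{*}$ and rotated by $180^{\circ}$ (via $\lambda_{n}$). Each rotated triangle has its hypotenuse against the hypotenuse of the preceding triangle, so consecutive triangles form a staircase of parallelogram-like pieces. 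If a triangle does not fit in the current layer, a new layer is created directly above the first triangle of the preceding layer, starting again at the left leg. The process stops when all triangles are packed or when some triangle fails. As in Lemma~\ref{lem2.1}, I would argue by contradiction, assuming the process stops at a triangle $T_{n_{k}}$.

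I would keep the notation of Lemma~\ref{lem2.1}. Let $T_{n_{j}}$ be the first triangle of the $(j+1)$-st layer, and set $y_{1}=t_{1}$ and $y_{j+1}=y_{j}+t_{n_{j}}$; thus $y_{j}$ is the height of the base of the $(j+1)$-st layer. Failure of the process gives
\begin{align*}
y_{k}+t_{n_{k}}=t_{1}+t_{n_{1}}+\cdots+t_{n_{k}}>c .
\end{align*}
The key per-layer estimate I aim for is
\begin{align*}
\sum_{n=n_{j-1}+1}^{n_{j}}|T_{n}|>t_{n_{j}}\cdot(c-y_{j}) \qquad (j=1,\ldots,k,\ n_{0}=1).
\end{align*}
The reason is as follows. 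The triangles of the $j$-th layer all have legs at least $t_{n_{j}}$. The triangle $T_{n_{j}}$ did not fit in that layer, and the width of $T^{*}$ available at height $y_{j}$ (the top level reached by a triangle of leg $t_{n_{j}}$ placed on the layer) is $c-y_{j}$. Together these force the alternating triangles of the layer to cover a region of height $t_{n_{j}}$ and horizontal width at least $c-y_{j}$. For $j=1$ this region is the part to the right of $T_{1}$. The same gray-parallelogram argument as in Lemma~\ref{lem2.1} then gives the estimate.

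Summing, we obtain
\begin{align*}
\sum_{n\le n_{k}}|T_{n}|>\tfrac12 t_{1}^{2}+\sum_{j=1}^{k}t_{n_{j}}(c-y_{j}).
\end{align*}
The sum on the right is a left-endpoint Riemann sum for the decreasing function $c-y$ over intervals $[y_{j},y_{j}+t_{n_{j}}]$. These intervals tile an interval starting at $t_{1}$ and extending past $c$. Truncating at $c$, where the integrand becomes non-positive, the sum is therefore at least
\begin{align*}
\int_{t_{1}}^{c}(c-y)\,dy=\tfrac12(c-t_{1})^{2}.
\end{align*}
The last term may be negative if $y_{k}>c$, but then $k-1$ layers already tile past $c$ and the bound still holds. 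This contradicts the hypothesis on $\sum|T_{n}|$.

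The main obstacle is the per-layer estimate. The layer is trapezoidal rather than rectangular, and the triangles in it have different sizes, so the alternating arrangement leaves triangular gaps. I would handle this by comparing each consecutive pair (an upright triangle and a rotated one) with the parallelogram of height $t_{n_{j}}$ spanned along the base. One must check carefully that the width actually reached before failure is at least $c-y_{j}$, using that the failing triangle has leg at most that of every triangle in the layer. The boundary cases also need checking: the first layer beside $T_{1}$, and the condition $c>t_{1}$, which guarantees that $T_{1}$ fits.
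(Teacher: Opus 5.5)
Your layer scheme and the comparison with $\int(c-y)\,dy$ match the paper's overall structure. The trouble is the step you flagged as the main obstacle. The estimate $\sum_{n=n_{j-1}+1}^{n_j}|T_n|>t_{n_j}(c-y_j)$ is false in general.

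In the rectangle of Lemma~\ref{lem2.1} only an upright triangle can fail to fit. The wasted region is then a triangle of area $<\frac{1}{2}t_{n_j}^2$, which $|T_{n_j}|$ pays for. In $T^{*}$ the right boundary is the line $x+y=c$, and a \emph{rotated} triangle can also fail. Suppose the last upright triangle of the layer has its hypotenuse on $x+y=A$. The next rotated triangle has its right-angle corner on $x+y=A+t_{n_j}$, so it fails as soon as $c-A<t_{n_j}$. No triangle of leg $t_{n_j}$, in either orientation, fits in the strip $A\le x+y\le c$. The part of that strip within height $t_{n_j}$ of the layer base has area $(c-A)t_{n_j}$, which can be close to $t_{n_j}^2$. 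Only half of this is compensated by $|T_{n_j}|$.

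For a concrete failure, take $c=1$, $t_1=\frac{1}{2}$, $t_2=t_3=t_4=\frac{3}{10}$. Your method places $\lambda_2T_2$ and $T_3$ beside $T_1$, and the hypotenuse of $T_3$ lies on $x+y=\frac{4}{5}$. Then $\lambda_4T_4$ would have its corner at $(\frac{4}{5},\frac{3}{10})\notin T^{*}$, so $n_1=4$. This gives $\sum_{n=2}^{4}|T_n|=\frac{27}{200}<\frac{3}{20}=t_{n_1}(c-y_1)$.

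What can be proved is the paper's weaker bound $t_{n_j}(c-y_j)-\frac{1}{2}t_{n_j}^2=\int_{y_j}^{y_j+t_{n_j}}(c-y)\,dy$. The wasted area is $<t_{n_j}^2$ when a rotated triangle fails and $<\frac{1}{2}t_{n_j}^2$ when an upright one fails.

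With the correct bound, your sum is an exact integral rather than an upper Riemann sum. Over all $k$ layers it equals $\int_{t_1}^{y_k+t_{n_k}}(c-y)\,dy=\frac{1}{2}(c-t_1)^2-\frac{1}{2}(y_k+t_{n_k}-c)^2$. This is \emph{less} than $\frac{1}{2}(c-t_1)^2$ because $y_k+t_{n_k}>c$, so the last layer must be handled separately.

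Since $T_{n_{k-1}}$ was placed, $y_k\le c$; your worry about $y_k>c$ never arises. Hence layers $1,\dots,k-1$ contribute at least $\int_{t_1}^{y_k}(c-y)\,dy$. For layer $k$ use only
$\sum_{n=n_{k-1}+1}^{n_k}|T_n|\ge\frac{1}{2}t_{n_k}^2>\frac{1}{2}(c-y_k)^2=\int_{y_k}^{c}(c-y)\,dy$,
which follows from $y_k+t_{n_k}>c$. Adding these gives $\sum|T_n|>\frac{1}{2}t_1^2+\frac{1}{2}(c-t_1)^2$, the desired contradiction.

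The paper's own proof needs the same repair. It sums its $k$ per-layer bounds and asserts that the total exceeds $\frac{1}{2}(c-t_1)^2$. By the identity above, that assertion is false.
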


\begin{proof}
	The method of packing $T_{1}, T_{2}, \ldots$ into $T^{*}$ is similar to that of Lemma~\ref{lem2.1}.
	We are going to prove the lemma indirectly. Assume on the contrary that $\{T_{n}\}$ does not permit a packing into $T^{*}$.
	Denote by $k$ the number of created layers in which at least one isosceles right triangle is packed. Denote by $T_{n_{j}}$
	the first isosceles right triangle lying in the $(j + 1)$-th layer for $j =1, 2, \ldots, k-1$ (see Fig.~\ref{f02}, $n_{1}=4$ and $n_{2}=7$). Furthermore, let $T_{n_{k}}$ be the isosceles right triangle which stops the packing process.
	
	We have
	\begin{align*}
		t_{1}+t_{n_{1}}+\cdots+t_{n_{k}}>c.
	\end{align*}
	That is,
	\begin{equation}\label{equ2}
		t_{n_{1}}+\cdots+t_{n_{k}}>c-t_{1}.
	\end{equation}
	
	Observe that the sum $\sum\limits_{n=2}^{n_{1}}|T_{n}|$ is greater than the area of a parallelogram with base length $c-t_{1}$ and with height $t_{n_{1}}$ minus the area of an isosceles right triangle with leg length $t_{n_{1}}$ as in Fig.~\ref{f02}. That is,
	
	\begin{align*}
		\sum\limits_{n=2}^{n_{1}}|T_{n}|>(c-t_{1})\cdot t_{n_{1}}-\frac{1}{2}\cdot t^{2}_{n_{1}}.
	\end{align*}
	
	Similarly, we get
	
	\begin{align*}
		\sum\limits_{n=n_{1}+1}^{n_{2}}|T_{n}|>(c-t_{1}-t_{n_{1}})\cdot t_{n_{2}}-\frac{1}{2}\cdot t^{2}_{n_{2}},
	\end{align*}
	
	\begin{align*}
		\sum\limits_{n=n_{2}+1}^{n_{3}}|T_{n}|>(c-t_{1}-t_{n_{1}}-t_{n_{2}})\cdot t_{n_{3}}-\frac{1}{2}\cdot t^{2}_{n_{3}},
	\end{align*}
	
	\begin{align*}
		\ldots,
	\end{align*}
	
	\begin{align*}
		\sum\limits_{n=n_{k-1}+1}^{n_{k}}|T_{n}|>(c-t_{1}-t_{n_{1}}-t_{n_{2}}-\cdots-t_{n_{k-1}})\cdot t_{n_{k}}-\frac{1}{2}\cdot t^{2}_{n_{k}}.
	\end{align*}
	\begin{figure}[!ht]
		\centering
		\includegraphics[width=6cm]{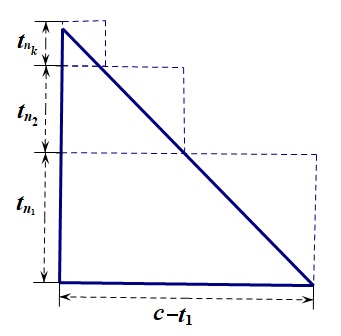}
		\caption{An isosceles right triangle with leg length $c-t_{1}$}
		\label{f03}
	\end{figure}
	By~\eqref{equ2} we see that
	\begin{align*}
		&(c-t_{1})\cdot t_{n_{1}}-\frac{1}{2}\cdot t^{2}_{n_{1}}\\
		+&(c-t_{1}-t_{n_{1}})\cdot t_{n_{2}}-\frac{1}{2}\cdot t^{2}_{n_{2}}\\
		+&(c-t_{1}-t_{n_{1}}-t_{n_{2}})\cdot t_{n_{3}}-\frac{1}{2}\cdot t^{2}_{n_{3}}\\
		+&\cdots\\
		+&(c-t_{1}-t_{n_{1}}-t_{n_{2}}-\cdots-t_{n_{k-1}})\cdot t_{n_{k}}-\frac{1}{2}\cdot t^{2}_{n_{k}}
	\end{align*}
	is greater than the area of the isosceles right triangle with leg length $c-t_{1}$, as shown in Fig.~\ref{f03}.
	
	Therefore,
	\begin{align*}\sum|T_{n}|>\frac{1}{2}\cdot t^{2}_{1}+\frac{1}{2}\cdot(c-t_{1})^{2},
	\end{align*}
	which is a contradiction.
\end{proof}

\begin{theorem}\label{thm.1}
	Suppose that $T$ is an isosceles right triangle with legs parallel to the sides of the unit square $I$. Let $\{T_{n}\}$ be a collection  of homothetic copies of $T$. If $\sum|T_{n}|\leq\frac{1}{2}$, then the triangles from $\{T_{n}\}$ can be parallel packed into $I$. Moreover, $\varrho(I,T)=\frac{1}{2}$.
\end{theorem}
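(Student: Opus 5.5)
Order the triangles so that $t_1\ge t_2\ge\cdots$, and split into cases according to the size of $t_1$. The total-area hypothesis gives $\frac12 t_1^2\le \sum|T_n|\le\frac12$, so $t_1\le 1$. If $t_1=1$, then $T_1$ already has area $\frac12$, all other triangles have zero area, and $T_1$ fits in $I$ as the lower-left half of the square. So we may assume $t_1<1=\min\{a,b\}$ and try to apply Lemma~\ref{lem2.1} with $a=b=1$. Its hypothesis is
\begin{align*}
\sum|T_n|\le \tfrac12 t_1^2+(1-t_1)^2,
\end{align*}
and this is implied by $\frac12\le \frac12 t_1^2+(1-t_1)^2$. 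The inequality is equivalent to $\frac32 t_1^2-2t_1+\frac12\ge 0$, that is, $(3t_1-1)(t_1-1)\ge0$, which holds exactly when $t_1\le\frac13$ (given $t_1<1$). So Lemma~\ref{lem2.1} settles the case $t_1\le \frac13$.

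The main obstacle is the case $\frac13<t_1<1$, where Lemma~\ref{lem2.1} applied to the whole square is too weak. First I would place $T_1$ in the lower-left corner of $I$, with its legs on $AB$ and $AD$. Then I would cut $I$ along the line through the endpoints of its hypotenuse, parallel to the diagonal, into two regions: the triangle $T_1$ and the complementary hexagon-like region. That region contains the upper-right isosceles right triangle $T^*$ of leg $c=1$, namely the half of $I$ above the diagonal, rotated by $180^\circ$. Rotation is permitted, since the $\lambda_n$ include $180^\circ$ rotations. It also contains strips of width $1-t_1$ along $AB$ and $AD$.

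My first attempt would be the simplest split: put $T_1$ in the lower half-triangle (it fits, since $t_1<1$), and pack $T_2,T_3,\dots$ into the rotated upper half $T^*$ of leg $1$ using Lemma~\ref{lem2.2}. This requires
\begin{align*}
\sum_{n\ge2}|T_n|\le \tfrac12 t_2^2+\tfrac12(1-t_2)^2,
\end{align*}
and we have $\sum_{n\ge2}|T_n|\le\frac12-\frac12 t_1^2$. The right-hand side is at least $\frac14$. Hence the split works whenever $t_1^2\ge\frac12$, or more precisely whenever $\frac12-\frac12t_1^2\le \frac12t_2^2+\frac12(1-t_2)^2$. The remaining range, with $t_1$ between $\frac13$ and $\frac1{\sqrt2}$, needs more room. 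There I would pack $T_1$ in the lower-left corner and treat the remaining strip region more carefully. One option is to place $T_2$ in the lower-right corner, rotated, so that $T_1$ and $T_2$ share the base $AB$ (possible since $t_1+t_2\le1$ once $t_1\le\frac1{\sqrt2}$ and $t_1^2+t_2^2\le1$). Then I would apply Lemma~\ref{lem2.1} to the rectangle of height $1-t_1$ above them, or Lemma~\ref{lem2.2} to a suitable triangular sub-region, checking the area inequality as a polynomial inequality in $t_1,t_2$. I expect this subcase bookkeeping to be the hardest part, and I would adjust the choice of sub-regions until the inequalities close.

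Finally, for tightness, take two copies of $T$ with leg $\frac12+\varepsilon$. Their total area is $(\frac12+\varepsilon)^2$, which is close to $\frac14$, so this example alone is too weak. A better example is a single triangle with leg slightly larger than $1$, which has area slightly more than $\frac12$ and cannot fit in $I$ because every parallel copy has legs of length at most $1$. This shows $\varrho(I,T)\le\frac12$; together with the packing result, $\varrho(I,T)=\frac12$.
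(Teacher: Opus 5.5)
\textbf{Verdict: genuine gap.} The theorem is not proved, because you leave open the range $t_1>\frac13$ with $2t_2(1-t_2)>t_1^2$, and that is where the difficulty lies.

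Your handling of the extreme ranges is correct:
\begin{itemize}
\item For $t_1\le\frac13$, Lemma~\ref{lem2.1} on the whole square closes exactly.
\item For $t_1\ge\frac1{\sqrt2}$, you put $T_1$ in the lower half of $I$ and apply Lemma~\ref{lem2.2} (with $c=1>t_2$) to the $180^\circ$-rotated upper half. This works because $\frac12-\frac12t_1^2\le\frac14\le\frac12t_2^2+\frac12(1-t_2)^2$.
\item The tightness example is also fine.
\end{itemize}

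The open range is handled only by ``adjust the sub-regions until the inequalities close.'' It contains the extremal collection of four triangles with leg $\frac12$ (total area exactly $\frac12$), and it is exactly where naive bounds fall short. Your one concrete suggestion does not work:
\begin{itemize}
\item The claim ``$t_1+t_2\le1$ once $t_1\le\frac1{\sqrt2}$ and $t_1^2+t_2^2\le1$'' is false; take $t_1=t_2=0.7$.
\item Even granting the placement, Lemma~\ref{lem2.1} on the $1\times(1-t_1)$ strip with leading triangle $T_3$ only guarantees $\frac12(t_1^2+t_2^2+t_3^2)+(1-t_3)(1-t_1-t_3)$. This is about $0.359$ at $t_1=t_2=t_3=0.45$, well below $\frac12$.
\item At $t_1=t_2=t_3=\frac12$ the lemma is not even applicable, since it needs $t_3<1-t_1$.
\end{itemize}

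The missing idea is to use several of the large triangles to fill part of $I$ before invoking a lemma, and to split on $t_1+t_3$ rather than on $t_1$ alone. The paper's proof does this.
\begin{itemize}
\item If $t_1+t_3<1$, it places $T_1,\dots,T_4$ in the complement of a rectangle $R^*$ with sides $1-t_2$ and $1-(t_1-t_2)$. It then applies Lemma~\ref{lem2.1} to $R^*$ with leading triangle $T_5$. This gives the lower bound $\frac12(t_1^2+t_2^2+3t_5^2)+(1-t_1+t_2-t_5)(1-t_2-t_5)$, whose minimum $\frac12$ occurs at $t_1=t_2=t_5=\frac13$.
\item If $t_1+t_3\ge1$, it places $T_1,T_2$ so that an isosceles right triangle of leg $2-t_1-t_2$ is left free. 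It then applies Lemma~\ref{lem2.2} with leading triangle $T_3$. This gives $\frac12(t_1^2+t_2^2+t_3^2)+\frac12(2-t_1-t_2-t_3)^2\ge\frac12$, with equality at $t_1=t_2=t_3=\frac12$, which is the four-halves configuration.
\end{itemize}
Your middle range needs some such multi-triangle configuration, together with a verified minimization.
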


\begin{proof}
	Suppose that $T$ is an isosceles right triangle with legs parallel to the sides of $I$ and that $\{T_{n}\}$ is a collection of homothetic copies of $T$.
	Let $\sum|T_{n}|\leq\frac{1}{2}$. Denote by $t_{n}$ the leg length of $T_{n}$ for $n=1,2,\ldots$. Without loss of generality we may assume that $t_{1}\geq t_{2} \geq \ldots$.
	
	{\it Case 1.}~$t_{1}+t_{3}< 1$.
	
	Place the isosceles right triangles $T_{1}$, $T_{2}$, $T_{3}$ and $T_{4}$ as in Fig.~\ref{f04}, the remaining isosceles right triangles $T_{5}, T_{6}, \ldots$ are used for packing the rectangle $R^{*}$ with side lengths $1-t_{2}$ and $1-(t_{1}-t_{2})$.
	
	\begin{figure}[!ht]
		\centering
		\includegraphics[width=6cm]{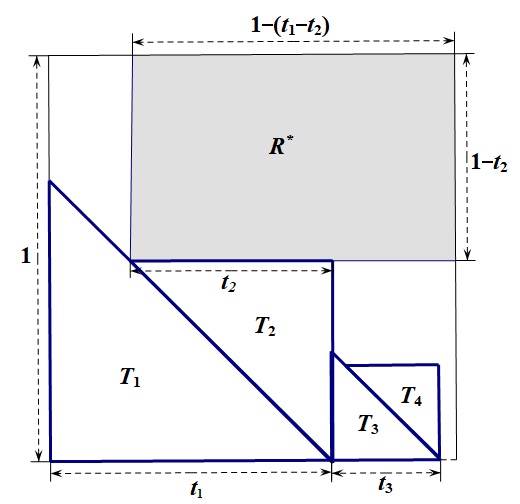}
		\caption{$t_{1}+t_{3}< 1$}
		\label{f04}
	\end{figure}
	
	By Lemma~\ref{lem2.1} we see that if the isosceles right triangles from $\{T_{n}\}$ cannot be parallel packed into $I$, then
	\begin{align*}
		\sum|T_{n}|&>\frac{1}{2}\cdot (t^{2}_{1}+t^{2}_{2}+t_{3}^{2}+t_{4}^{2})+\frac{1}{2}\cdot t^{2}_{5}+(1-t_{1}+t_{2}-t_{5})\cdot(1-t_{2}-t_{5})\\
		&\geq\frac{1}{2}\cdot (t^{2}_{1}+t^{2}_{2}+3\cdot t_{5}^{2})+(1-t_{1}+t_{2}-t_{5})\cdot(1-t_{2}-t_{5}).
	\end{align*}

	Using the standard method for finding the absolute minimum of a function of three variables, one checks that this lower bound is not less than $\frac{1}{2}$ provided $1\geq t_{1}\geq t_{2}\geq t_{5}>0$ and $t_{1}+t_{5}\leq t_{1}+t_{3}< 1$. The minimum value $\frac{1}{2}$ is attended for $t_{1}=t_{2}=t_{5}=\frac{1}{3}$, which is a contradiction.
	
	{\it Case 2.}~$t_{1}+t_{3}\geq 1$.
	
	Since $t_{1}+t_{2}\geq t_{1}+t_{3}\geq 1$, we place $T_{1}$ and $T_{2}$ as in Fig.~\ref{f05}, the remaining isosceles right triangles $T_{3}, T_{4}, \ldots$ are used for packing the isosceles right triangle $T^{c}$ with leg length $2-t_{1}-t_{2}$.
	
	By Lemma~\ref{lem2.2} we see that if the isosceles right triangles from $\{T_{n}\}$ cannot be parallel packed into $I$, then
	\begin{align*}
		\sum|T_{n}|>\frac{1}{2}\cdot (t^{2}_{1}+t^{2}_{2})+\frac{1}{2}\cdot t^{2}_{3}+\frac{1}{2}\cdot(2-t_{1}-t_{2}-t_{3})^{2}.
	\end{align*}
	
	A standard minimization argument shows that this lower bound is not less than $\frac{1}{2}$ provided $1\geq t_{1}\geq t_{2}\geq t_{3}>0$ and $t_{1}+t_{3}\geq 1$. The minimum value $\frac{1}{2}$ is attended for $t_{1}=t_{2}=t_{3}=\frac{1}{2}$, which is a contradiction.
	
	\begin{figure}[!ht]
		\centering
		\includegraphics[width=6cm]{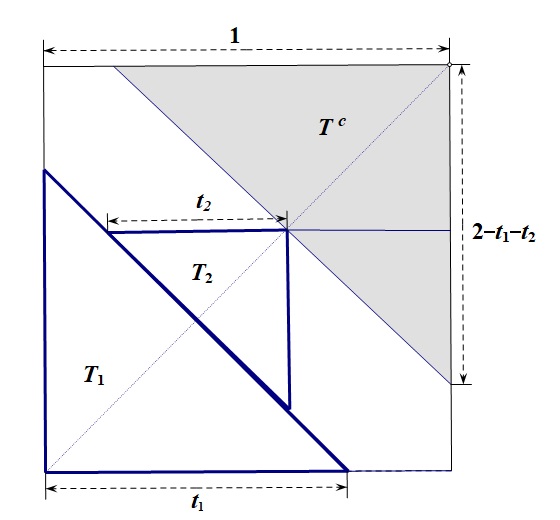}
		\caption{$t_{1}+t_{3}\geq 1$}
		\label{f05}
	\end{figure}
	
	Observe that one isosceles right triangle with leg length greater than $1$ (or four isosceles right triangles with leg lengths greater than $\frac{1}{2}$)
	cannot be parallel packed into $I$. Therefore, $\varrho(I,T)=\frac{1}{2}$.
\end{proof}

\begin{corollary}\label{thm.2}
	Suppose that $T$ is an isosceles right triangle with legs parallel to the diagonals of the unit square $I$. Let $\{T_{n}\}$ be a collection of homothetic copies of $T$. If $\sum|T_{n}|\leq\frac{1}{4}$, then the triangles from $\{T_{n}\}$ can be parallel packed into $I$. Moreover, $\varrho(I,T)=\frac{1}{4}$.
\end{corollary}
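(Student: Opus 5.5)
The plan is to reduce the statement to Theorem~\ref{thm.1} by cutting $I$ into two triangles along a diagonal. Let $I=ABCD$ and suppose the legs of $T$ are parallel to the diagonals $AC$ and $BD$, so the hypotenuse of $T$ is parallel to a side of $I$. The diagonal $AC$ splits $I$ into two triangles $ABC$ and $ACD$. Each is an isosceles right triangle with legs of length $1$ along sides of $I$ and hypotenuse $AC$. The diagonal $BD$ meets $AC$ at the centre $O$, so instead I cut $I$ by both diagonals into the four triangles $ABO,BCO,CDO,DAO$. Each of these is an isosceles right triangle with right angle at $O$, legs of length $\frac{\sqrt2}{2}$ along the diagonals, and hypotenuse a side of $I$.

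A triangle homothetic to $T$, possibly after a $180^\circ$ rotation, is homothetic to $ABO$ or to $CDO$. So it is better to pair $ABO$ with $CDO$, or more simply to use the square $Q$ spanned by $O$, the midpoints of $AB$ and $BC$, and $B$. This square has side $\frac12$ and sides parallel to those of $I$, which is the wrong orientation. The correct object is the square with vertices $A,O,\ldots$, which does not fit inside $I$. Instead I will pack directly into the triangle $ABO$ scaled: $ABO$ is an isosceles right triangle with leg $c=\frac{\sqrt2}{2}$ and area $\frac14$, whose legs are parallel to those of $T$.

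\textbf{Sufficiency.} Let $\sum|T_n|\le\frac14$ and order $t_1\ge t_2\ge\cdots$. First observe $|T_1|\le\frac14$, so $t_1\le\frac{\sqrt2}{2}=c$. If $t_1=c$, then $T_1$ is the only triangle, and it fills $ABO$. Otherwise $c>t_1$, and
\[
\tfrac12 t_1^2+\tfrac12(c-t_1)^2\ \ge\ \tfrac14\cdot\tfrac12\cdot 2\cdot\tfrac12=\tfrac{c^2}{4}=\tfrac18 .
\]
This is only $\frac18$, not $\frac14$, so Lemma~\ref{lem2.2} in one triangle $ABO$ is insufficient. Here I use the two triangles $ABO\cup BCO$, which form the isosceles right triangle $ABC$ with hypotenuse $AC$. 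That region is homothetic to $T$ rotated by $90^\circ$, so it is not parallel to the copies.

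The cleaner choice is the square with vertices $O$, $B$ and the midpoints: its sides lie along the diagonals of $I$. That is the square $O M_1 B M_2$ with $M_1,M_2$ midpoints of $AB,BC$? No — $OM_1$ is parallel to a side. The right square is the one inscribed in $I$ with vertices at the four midpoints of the sides of $I$. Its side is $\frac{\sqrt2}{2}$, its area is $\frac12$, and its sides are parallel to the diagonals of $I$, hence to the legs of $T$. Theorem~\ref{thm.1}, after scaling by $\frac{\sqrt2}{2}$, packs any collection of total area at most $\frac12\cdot\frac12=\frac14$ into this square, and hence into $I$. This gives $\varrho\ge\frac14$ in the normalization of the corollary.

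\textbf{Tightness.} I will show that a single copy of $T$ with area slightly above $\frac14$ does not fit. Such a copy has legs $t>\frac{\sqrt2}{2}$ and hypotenuse $\sqrt2\,t>1$. Its hypotenuse is parallel to a side of $I$, so it cannot fit in a unit square. The bound $\frac14$ is therefore sharp.

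The main obstacle is the orientation bookkeeping: checking that the midpoint square has sides parallel to the legs of $T$, and that the $180^\circ$ rotations allowed in Theorem~\ref{thm.1} stay admissible after the rescaling. The quantitative part is immediate from Theorem~\ref{thm.1} by homothety.
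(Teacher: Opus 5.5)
Your final argument is correct and is essentially the paper's proof: you use the midpoint square of side $\frac{\sqrt{2}}{2}$, whose sides are parallel to the diagonals of $I$, apply Theorem~\ref{thm.1} after scaling to get the bound $\frac12\cdot\frac12=\frac14$, and prove sharpness with a single copy of leg $>\frac{\sqrt{2}}{2}$, whose hypotenuse has length $>1$ and is parallel to a side of $I$. In a write-up, drop the abandoned detours: the attempt via $ABO$ and Lemma~\ref{lem2.2}, and the remark that $ABC$ is a $90^\circ$-rotated copy of $T$, which is false because its legs lie along the sides of $I$.
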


\begin{figure}[!ht]
	\centering
	\includegraphics[width=6cm]{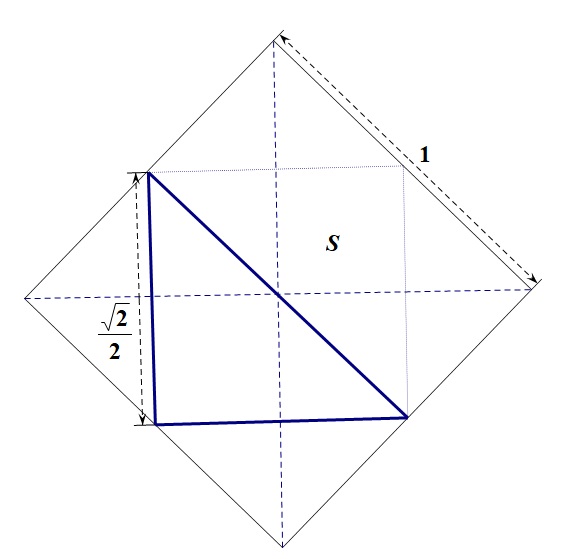}
	\caption{A unit square $I$ and the square $S$ with side length $\frac{\sqrt{2}}{2}$}
	\label{f06}
\end{figure}

\begin{proof}
	Joining the midpoints of the four sides of $I$, we obtain a square $S$ with side length $\frac{\sqrt{2}}{2}$ as in Fig.~\ref{f06}. By Theorem~\ref{thm.1} we know that if $\sum|T_{n}|\leq\frac{1}{2}|S|=\frac{1}{4}$, then the triangles from $\{T_{n}\}$ can be parallel packed into $S$ (and thus can be parallel packed into $I$). Moreover, any isosceles right triangle with leg length greater than $\frac{\sqrt{2}}{2}$ (and with legs parallel to the diagonals of the square $I$) cannot be packed into $I$ (see Fig.~\ref{f06} again). Therefore, $\varrho(I,T)=\frac{1}{4}$.
\end{proof}

\section{Packing a square with equilateral triangles}\label{sec3}

\begin{lemma}\label{lem3.1}
	Suppose that $R$ is a rectangle with side lengths $a$ and $h$. Let $\Delta$ be an equilateral triangle with a side parallel to the side of length $a$. Moreover, let $\{\Delta_{n}\}$ be a collection of homothetic copies of $\Delta$ such that $t_{1}\geq t_{2}\geq\ldots$, where $t_{n}$ denotes the side length of $\Delta_{n}$ for $n=1,2,\ldots$. If
	\begin{align*}
		\sum|\Delta_{n}|\leq\frac{\sqrt{3}}{4}\cdot t^{2}_{1}+(a-t_{1})\cdot(h-\frac{\sqrt{3}}{2} t_{1})
	\end{align*}
	and if $t_{1}<\min\{a,\frac{2\sqrt{3}}{3}h\}$, then the triangles from $\{\Delta_{n}\}$ can be parallel packed into $R$.
\end{lemma}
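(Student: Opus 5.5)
We follow the layer method of Lemma~\ref{lem2.1}, with the equilateral triangles packed in alternating orientation. Let $R:=ABCD$ with base $AB$ of length $a$ and height $h$. The triangles are packed into horizontal layers, starting at the left side of each layer. Inside a layer whose height equals the height $\frac{\sqrt{3}}{2}t$ of its first triangle, we place the triangles $\Delta_n$ alternately pointing up ($\Delta_n$) and pointing down ($\lambda_n\Delta_n$). Each new triangle is pushed left until it touches the previous one, and every triangle rests on the base of the layer.

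When a triangle no longer fits in the current layer, a new layer is opened directly above the first triangle of that layer, as in Fig.~\ref{f01}. The condition $t_1<\min\{a,\frac{2\sqrt3}{3}h\}$ guarantees that $\Delta_1$ fits into the first layer. We argue indirectly: suppose the process stops. Let $k$ be the number of nonempty layers, let $\Delta_{n_j}$ be the first triangle of the $(j+1)$-st layer, and let $\Delta_{n_k}$ be the triangle that stops the process. Then
\begin{align*}
\tfrac{\sqrt3}{2}(t_1+t_{n_1}+\cdots+t_{n_k})>h,\qquad\text{so}\qquad t_{n_1}+\cdots+t_{n_k}>\tfrac{2\sqrt3}{3}h-t_1 .
\end{align*}

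The key step is the area estimate for each layer. The triangles $\Delta_{n_{j-1}+1},\dots,\Delta_{n_j}$ are those that did not fit into the $j$-th layer after its first triangle, together with the overflowing one. We claim that their total area exceeds the area of a parallelogram of base $a-t_{n_{j-1}}\ge a-t_1$ and height $\frac{\sqrt3}{2}t_{n_j}$. The reason is the same as in Lemma~\ref{lem2.1}. The triangles placed in the layer have sizes at least $t_{n_j}$. Consecutive up- and down-pointing triangles interlock, so the strip of height $\frac{\sqrt3}{2}t_{n_j}$ along the layer base is covered by them, up to slanted end pieces. The overflowing triangle $\Delta_{n_j}$ (counted in the sum) compensates for the uncovered piece at the right end. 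This gives
\begin{align*}
\sum_{n=n_{j-1}+1}^{n_j}|\Delta_n|>(a-t_1)\cdot\tfrac{\sqrt3}{2}t_{n_j}.
\end{align*}

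Summing these estimates and using the inequality for the $t_{n_j}$, we obtain
\begin{align*}
\sum|\Delta_n|&>\tfrac{\sqrt3}{4}t_1^2+(a-t_1)\cdot\tfrac{\sqrt3}{2}\Bigl(\tfrac{2\sqrt3}{3}h-t_1\Bigr)\\
&=\tfrac{\sqrt3}{4}t_1^2+(a-t_1)\Bigl(h-\tfrac{\sqrt3}{2}t_1\Bigr),
\end{align*}
which contradicts the hypothesis.

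The main obstacle is the geometric claim in the layer estimate. We must verify that in an alternating up/down row of decreasing equilateral triangles, the covered region below height $\frac{\sqrt3}{2}t_{n_j}$ really contains a parallelogram of the stated area. The left slanted gap near the first triangle and the right end both need handling. We would first try to prove directly that each consecutive up/down pair of triangles covers a parallelogram of height $\frac{\sqrt3}{2}t_{n_j}$ whose horizontal width equals the advance of the packing front. Then we would check that the overflowing triangle makes up the remaining width, as in Fig.~\ref{f01}. If that fails, we would weaken the base length to $a-t_1$ and let the first triangle of each layer absorb the slanted defect.
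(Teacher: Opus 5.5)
Your proposal is correct and follows the paper's proof essentially step for step. Both use the same alternating up/down layer packing, the same height inequality $\frac{\sqrt3}{2}(t_1+t_{n_1}+\cdots+t_{n_k})>h$, the same per-layer bound $(a-t_{n_{j-1}})\frac{\sqrt3}{2}t_{n_j}\ge(a-t_1)\frac{\sqrt3}{2}t_{n_j}$, and the same summation. Your sketched verification of the layer bound is sound and supplies detail the paper leaves to its figure: each down/up pair covers a parallelogram of height $\frac{\sqrt3}{2}t_{n_j}$ in the bottom strip, and the overflow triangle $\Delta_{n_j}$ makes up the remaining width. One wording slip: $\Delta_{n_{j-1}+1},\dots,\Delta_{n_j-1}$ are the triangles that \emph{were} placed in the $j$-th layer, not ones that did not fit.
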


\begin{proof}
	Let $R:=ABCD$, where the length of $AB$ is $a$. We describe a method of packing $\Delta_{1}, \Delta_{2}, \ldots$ into $R$. The method is based on the method from~\cite{J.W} and~\cite{J.J1}. The side $AB$ of $R$ is called the base. The sides $AD$ and $BC$ of $R$ are called the left and the right side, respectively. We pack the equilateral triangles from the collection in layers, whose bases are parallel to the base of $R$. We pack the equilateral triangles into $R$, beginning from the left side of $R$ along the base of the layer. The base of the first layer is the base of $R$. The assumption $t_{1}<\min\{a, \frac{2\sqrt{3}}{3}h\}$ guarantees that $\Delta_{1}$ can be packed into the first layer. If an equilateral triangle cannot be packed into $R$ in a layer, then the new layer is created directly above the first equilateral triangle lying in the preceding layer as in Fig.~\ref{f07} (in all figures $\lambda_{n}\Delta_{n}$ is denoted by integer $n$, for short). This packing process is terminated either as long as all equilateral triangles have been packed or as long as there is an equilateral triangle which cannot be packed into $R$ by this method.
	
	We are going to prove the lemma indirectly. Assume on the contrary that the equilateral triangles from $\{\Delta_{n}\}$ cannot be packed into $R$ by the method described in the preceding paragraph.
	\begin{figure}[!ht]
		\centering
		\includegraphics[width=8cm]{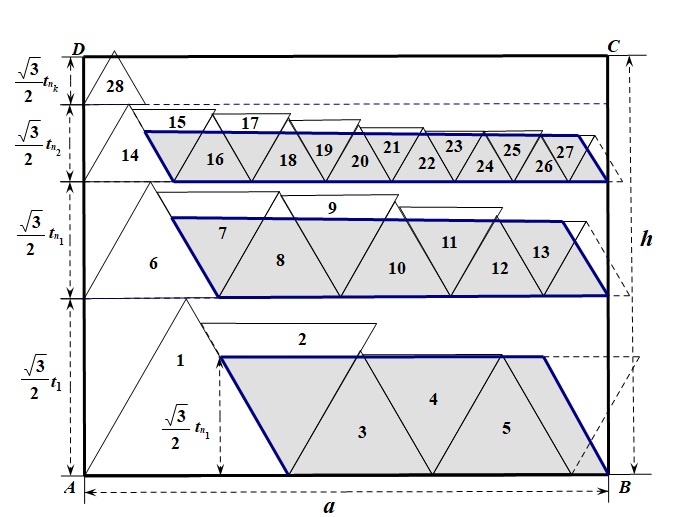}
		\caption{A packing method of the rectangle $R$.}
		\label{f07}
	\end{figure}
	Denote by $k$ the number of created layers in which at least one equilateral triangle is packed. Denote by $\Delta_{n_{j}}$
	the first equilateral triangle lying in the $(j + 1)$-st layer for $j = 1, 2, \ldots, k-1$ (in Fig.~\ref{f07}, $k = 3$, $n_{1}=6$ and $n_{2}=14$). Furthermore, let $\Delta_{n_{k}}$
	be the equilateral triangle which stops the packing process.
	
	We have
	\begin{align*}
		\frac{\sqrt{3}}{2}\cdot(t_{1}+t_{n_{1}}+\cdots+t_{n_{k}})>h.
	\end{align*}
	Consequently,
	\begin{equation}\label{equ3}
		\frac{\sqrt{3}}{2}\cdot(t_{n_{1}}+\cdots+t_{n_{k}})>h-\frac{\sqrt{3}}{2} t_{1}.
	\end{equation}
	
	Notice that the sum $\sum\limits_{n=2}^{n_{1}}|\Delta_{n}|$ is greater than the area of the gray parallelogram with base length $a-t_{1}$ and with height $\frac{\sqrt{3}}{2}t_{n_{1}}$ as in Fig.~\ref{f07}. That is,
	\begin{align*}
		\sum\limits_{n=2}^{n_{1}}|\Delta_{n}|>(a-t_{1})\cdot\frac{\sqrt{3}}{2} t_{n_{1}}.
	\end{align*}
	
	Similarly, from $t_{1}\geq t_{n_{1}}\geq t_{n_{2}}\geq\ldots\geq t_{n_{k-1}}$ we know that
	
	\begin{align*}
		\sum\limits_{n=n_{1}+1}^{n_{2}}|\Delta_{n}|>(a-t_{n_{1}})\cdot\frac{\sqrt{3}}{2} t_{n_{2}}\geq(a-t_{1})\cdot\frac{\sqrt{3}}{2} t_{n_{2}},
	\end{align*}
	
	\begin{align*}
		\sum\limits_{n=n_{2}+1}^{n_{3}}|\Delta_{n}|>(a-t_{n_{2}})\cdot\frac{\sqrt{3}}{2} t_{n_{3}}\geq(a-t_{1})\cdot\frac{\sqrt{3}}{2} t_{n_{3}},
	\end{align*}
	
	\begin{align*}
		\ldots,
	\end{align*}
	
	\begin{align*}
		\sum\limits_{n=n_{k-1}+1}^{n_{k}}|\Delta_{n}|>(a-t_{n_{k-1}})\cdot\frac{\sqrt{3}}{2} t_{n_{k}}\geq(a-t_{1})\cdot\frac{\sqrt{3}}{2} t_{n_{k}}.
	\end{align*}
	
	Therefore, by \eqref{equ3} we get
	\begin{align*}
		\sum\limits_{n=1}^{n_{k}}|\Delta_{n}|&>\frac{\sqrt{3}}{4}\cdot t^{2}_{1}+ (a-t_{1})\cdot\frac{\sqrt{3}}{2}\cdot (t_{n_{1}}+t_{n_{2}}+\cdots+t_{n_{k}})\\
		&>\frac{\sqrt{3}}{4}\cdot t^{2}_{1}+ (a-t_{1})\cdot(h-\frac{\sqrt{3}}{2} t_{1}),
	\end{align*}
	which is a contradiction.
\end{proof}

\begin{lemma}\label{lem3.2}
	Suppose that $Z$ is a right trapezoid with the shorter base of length $b$, with acute angle $60^{\circ}$ and with height $h$. Let $\Delta$ be an equilateral triangle with a side parallel to the bases of $Z$, and let $\{\Delta_{n}\}$ be a collection of homothetic copies of $\Delta$ such that $t_{1}\geq t_{2}\geq\ldots$, where $t_{n}$ denotes the side length of $\Delta_{n}$ for $n=1,2,\ldots$. If
	\begin{align*}
		\sum|\Delta_{n}|\leq\frac{\sqrt{3}}{4}\cdot t^{2}_{1}+(b-\frac{3}{4}t_{1}+\frac{\sqrt{3}}{6}h)\cdot(h-\frac{\sqrt{3}}{2}t_{1}),
	\end{align*}
	then the equilateral triangles from $\{\Delta_{n}\}$ can be parallel packed into $Z$.
\end{lemma}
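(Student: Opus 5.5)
We use the same layer method as in Lemma~\ref{lem3.1} and argue indirectly. Let $Z:=ABCD$ with the longer base $AB$ as the base, the left side $AD$ perpendicular to the bases, and the slanted right side $BC$ making the angle $60^{\circ}$ with $AB$. Then the longer base has length $b+\frac{\sqrt{3}}{3}h$. At height $y$ above $AB$ the width of $Z$ is $b+\frac{\sqrt{3}}{3}(h-y)$.

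Triangles are packed in layers parallel to $AB$, starting at the left side $AD$. In a layer the triangles alternate between $\Delta_n$ and its $180^{\circ}$ rotation $\lambda_n\Delta_n$, exactly as in Fig.~\ref{f07}. A new layer is opened directly above the first triangle of the previous layer. The slanted side of $Z$ is parallel to a side of the triangles, so the last triangle in a layer can be placed pointing up and flush against $BC$. Hence a layer of height $\frac{\sqrt{3}}{2}t$ whose base is at height $y$ is blocked only where its top edge meets $BC$. Its usable width is therefore the top width $b+\frac{\sqrt{3}}{3}(h-y)-t$ of that strip. The process stops when some $\Delta_{n_k}$ cannot be placed at all. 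With $n_1,\dots,n_k$ as in the proof of Lemma~\ref{lem3.1}, this gives
\[
\tfrac{\sqrt{3}}{2}(t_1+t_{n_1}+\cdots+t_{n_k})>h .
\]
If $\frac{\sqrt{3}}{2}t_1>h$, or $\Delta_1$ does not fit, the right-hand side of the hypothesis is at most $\frac{\sqrt3}{4}t_1^2$ (a short check), and the claim is trivial or contradicts the hypothesis. So we may assume $\Delta_1$ fits.

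The area estimate follows the proof of Lemma~\ref{lem3.1}. Layer $j$ has height $\frac{\sqrt{3}}{2}t_{n_{j-1}}$, with $n_0=1$, and its base lies at height $y_{j-1}=\frac{\sqrt{3}}{2}(t_1+\cdots+t_{n_{j-2}})$. Once its first triangle is removed, the triangles of the layer together with the overflowing triangle $\Delta_{n_j}$ cover a parallelogram. Its height is $\frac{\sqrt{3}}{2}t_{n_j}$, since every triangle in the layer has side at least $t_{n_j}$. Its width is at least the width of $Z$ at the top of the layer minus $t_{n_{j-1}}$. This gives
\[
\sum_{n=n_{j-1}+1}^{n_j}|\Delta_n|>\Big(b+\tfrac{\sqrt{3}}{3}(h-y_{j})-t_{n_{j-1}}\Big)\cdot\tfrac{\sqrt{3}}{2}t_{n_j}.
\]
Unlike in Lemma~\ref{lem3.1}, the widths shrink as we go up. Summing the strips therefore gives a region under a slanted line, and we bound it the way Lemma~\ref{lem2.2} bounds its sum by the triangle of Fig.~\ref{f03}. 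The strips of heights $\frac{\sqrt{3}}{2}t_{n_j}$ stacked from height $\frac{\sqrt3}{2}t_1$ up to $h$ cover the trapezoid of height $h-\frac{\sqrt{3}}{2}t_1$. We subtract, on the left, a band of width $t_1$ (using $t_{n_{j-1}}\le t_1$), and on the right, the little triangles cut off by the slant.

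The main obstacle is the final bookkeeping. We must show that the total lower bound is at least
\[
\Big(b-\tfrac{3}{4}t_1+\tfrac{\sqrt{3}}{6}h\Big)\Big(h-\tfrac{\sqrt{3}}{2}t_1\Big).
\]
The right-hand factor is the height of the region above layer one. The left-hand factor should be its average usable width: the width at mid-height between $\frac{\sqrt3}{2}t_1$ and $h$ is $b+\frac{\sqrt3}{6}h-\frac14 t_1$, and removing $t_1$ for the first-triangle band leaves exactly $b-\frac34t_1+\frac{\sqrt3}{6}h$. The region above is a trapezoid whose top edge lies exactly at height $h$. Using the top width of each strip loses a small triangle per strip, and the overshoot above $h$ has to compensate for these losses. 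We expect to settle this with the discrete inequality of Lemma~\ref{lem2.2}: the sum of the strip-trapezoid areas with their slanted losses is at least the area of the limiting trapezoid, once the strip heights sum to more than $h-\frac{\sqrt3}{2}t_1$. This contradicts the hypothesis.
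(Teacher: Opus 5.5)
There is a genuine gap, and it lies in your per-layer estimate rather than in the final bookkeeping. You bound the free part of layer $j$ by the width of $Z$ at the \emph{top} of the layer minus $t_{n_{j-1}}$, i.e.\ by $W(y_j)-t_{n_{j-1}}$ with $W(y)=b+\frac{\sqrt{3}}{3}(h-y)$. That is a valid lower bound, but it discards $\frac12 t_{n_{j-1}}$ of width in every layer, and the overshoot above $h$ cannot recover the accumulated loss. Take all $t_{n_j}=t$ and let $h$ tend to $\frac{\sqrt{3}}{2}(k+1)t$ from below. Your layer bounds then sum to $\frac{\sqrt{3}}{2}kt\,(b+\frac{k-3}{4}t)$. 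The target $(b-\frac34t_1+\frac{\sqrt{3}}{6}h)(h-\frac{\sqrt{3}}{2}t_1)$ tends to $\frac{\sqrt{3}}{2}kt\,(b+\frac{k-2}{4}t)$. That is a shortfall of $\frac{\sqrt{3}}{8}kt^2$; for $b=1$, $t=0.2$, $k=4$ it is $0.727$ versus $0.762$. So no inequality of the Lemma~\ref{lem2.2} type can close your argument. An arithmetic slip in your heuristic hides this deficit. The mid-height width $b+\frac{\sqrt{3}}{6}h-\frac14t_1$ minus a band of width $t_1$ is $b+\frac{\sqrt{3}}{6}h-\frac54t_1$, not $b-\frac34t_1+\frac{\sqrt{3}}{6}h$. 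Your estimates naturally yield only this weaker bound. (Also, the top width of a strip of height $\frac{\sqrt{3}}{2}t$ based at height $y$ is $W(y)-\frac12 t$, not $W(y)-t$.)

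The missing idea is that the slant costs nothing: there are no ``little triangles cut off by the slant''. The side $BC$ is parallel to the right side of the upward triangle $\Delta_{n_{j-1}}$ that opens layer $j$. So the free part of that layer is a genuine parallelogram of horizontal width $W(y_{j-1})-t_{n_{j-1}}$, measured at the \emph{bottom} of the layer. This is the paper's estimate, e.g.\ $(b+\frac{\sqrt{3}}{3}h-\frac12t_1-t_{n_1})\cdot\frac{\sqrt{3}}{2}t_{n_2}$ for the second layer. Since $W(y_{j-1})-W(y_j)=\frac12t_{n_{j-1}}$, this width equals $W(y_j)-\frac12t_{n_{j-1}}\ge W(y_j)-\frac12t_1$. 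So only a band of width $\frac12t_1$ is lost, not $t_1$. This is exactly the paper's passage from $-\frac12t_1-\cdots-t_{n_{j-1}}$ to $-t_1-\cdots-\frac12t_{n_{j-1}}$. The resulting terms are the bottom widths of consecutive strips of the trapezoid in Fig.~\ref{f09}, times the strip heights. That trapezoid has bases $b+\frac{\sqrt{3}}{3}h-t_1$ and $b-\frac12t_1$ and height $h-\frac{\sqrt{3}}{2}t_1$. Since its width decreases upward, this outer staircase already exceeds its area $(b-\frac34t_1+\frac{\sqrt{3}}{6}h)(h-\frac{\sqrt{3}}{2}t_1)$ once the strip heights sum to more than $h-\frac{\sqrt{3}}{2}t_1$; no compensation by the overshoot is needed.

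Separately, your ``short check'' for $\frac{\sqrt{3}}{2}t_1>h$ fails when also $b-\frac34t_1+\frac{\sqrt{3}}{6}h<0$, e.g.\ $b=h=0.1$, $t_1=1$. The product is then positive, so a single too-tall $\Delta_1$ satisfies the hypothesis. The statement needs the extra assumption that $\Delta_1$ fits, which the paper also uses tacitly.
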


\begin{proof}
	The method of packing $\Delta_{1}, \Delta_{2}, \ldots$ into $Z$ is similar to that of Lemma~\ref{lem3.1}.
	We are going to prove the lemma indirectly. Assume on the contrary that the equilateral triangles from $\{\Delta_{n}\}$ cannot be parallel packed into $Z$.
	Denote by $k$ the number of created layers in which at least one equilateral triangle is packed. Denote by $\Delta_{n_{j}}$
	the first equilateral triangle lying in the $(j + 1)$-th layer for $j =1, 2, \ldots, k-1$ (see Fig.~\ref{f08}, $n_{1}=5$ and $n_{2}=10$). Furthermore, let $\Delta_{n_{k}}$ be the equilateral triangle which stops the packing process.
	\begin{figure}[!ht]
		\centering
		\includegraphics[width=8cm]{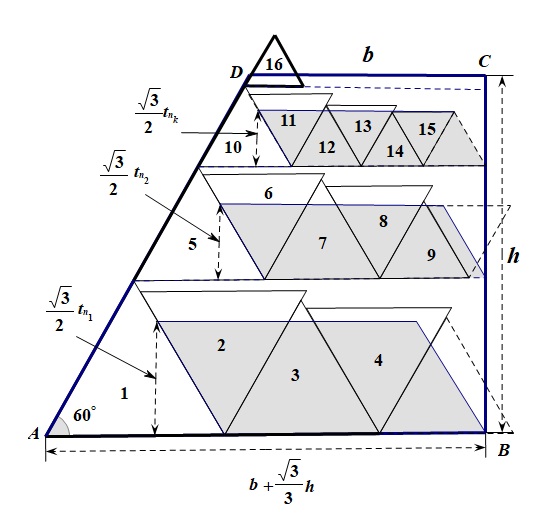}
		\caption{A packing method of the right trapezoid $Z$.}
		\label{f08}
	\end{figure}
	
	We have
	\begin{align*}
		\frac{\sqrt{3}}{2}(t_{1}+t_{n_{1}}+\cdots+t_{n_{k}})>h.
	\end{align*}
	That is,
	\begin{equation}\label{equ4}
		\frac{\sqrt{3}}{2}(t_{n_{1}}+\cdots+t_{n_{k}})>h-\frac{\sqrt{3}}{2}t_{1}.
	\end{equation}
	
	Observe that the sum $\sum\limits_{n=2}^{n_{1}}|\Delta_{n}|$ is greater than the area of the gray parallelogram with base length $b+\frac{\sqrt{3}}{3}h-t_{1}$ and with height $\frac{\sqrt{3}}{2} t_{n_{1}}$ as in Fig.~\ref{f08}. That is,
	
	\begin{align*}
		\sum\limits_{n=2}^{n_{1}}|\Delta_{n}|>(b+\frac{\sqrt{3}}{3}h-t_{1})\cdot\frac{\sqrt{3}}{2} t_{n_{1}}.
	\end{align*}
	
	Similarly, from $t_{1}\geq t_{n_{1}}\geq t_{n_{2}}\geq\ldots\geq t_{n_{k-1}}$ we know that
	
	\begin{align*}
		\sum\limits_{n=n_{1}+1}^{n_{2}}|\Delta_{n}|&>(b+\frac{\sqrt{3}}{3}h-\frac{1}{2}t_{1}-t_{n_{1}})\cdot\frac{\sqrt{3}}{2} t_{n_{2}}\\
		&\geq(b+\frac{\sqrt{3}}{3}h-t_{1}-\frac{1}{2}t_{n_{1}})\cdot\frac{\sqrt{3}}{2} t_{n_{2}},
	\end{align*}
	
	\begin{align*}
		\sum\limits_{n=n_{2}+1}^{n_{3}}|\Delta_{n}|&>(b+\frac{\sqrt{3}}{3}h-\frac{1}{2}t_{1}-\frac{1}{2}t_{n_{1}}-t_{n_{2}})\cdot\frac{\sqrt{3}}{2} t_{n_{3}}\\
		&\geq(b+\frac{\sqrt{3}}{3}h-t_{1}-\frac{1}{2}t_{n_{1}}-\frac{1}{2}t_{n_{2}})\cdot\frac{\sqrt{3}}{2} t_{n_{3}},
	\end{align*}
	
	\begin{align*}
		\ldots,
	\end{align*}
	
	\begin{align*}
		\sum\limits_{n=n_{k-1}+1}^{n_{k}}|\Delta_{n}|&>(b+\frac{\sqrt{3}}{3}h-\frac{1}{2}t_{1}-\frac{1}{2}t_{n_{1}}-\frac{1}{2}t_{n_{2}}-\cdots-t_{n_{k-1}})\cdot\frac{\sqrt{3}}{2} t_{n_{k}}\\
		&\geq(b+\frac{\sqrt{3}}{3}h-t_{1}-\frac{1}{2}t_{n_{1}}-\frac{1}{2}t_{n_{2}}-\cdots-\frac{1}{2}t_{n_{k-1}})\cdot\frac{\sqrt{3}}{2} t_{n_{k}}.
	\end{align*}
	
	\begin{figure}[!ht]
		\centering
		\includegraphics[width=6cm]{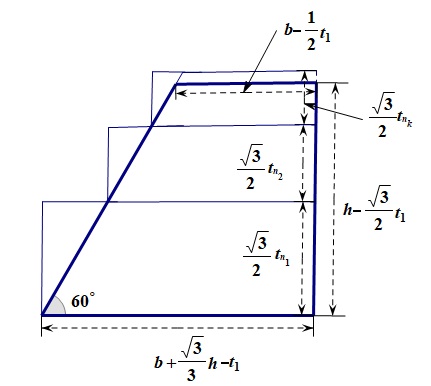}
		\caption{A right trapezoid with bases $b-\frac{1}{2}t_{1}$ and $b+\frac{\sqrt{3}}{3}h-t_{1}$, and with height $h-\frac{\sqrt{3}}{2}t_{1}$.}
		\label{f09}
	\end{figure}
	
	As shown in Fig.~\ref{f09}, by~\eqref{equ4} we see that the sum
	\begin{align*}
		&(b+\frac{\sqrt{3}}{3}h-t_{1})\cdot\frac{\sqrt{3}}{2} t_{n_{1}}\\
		+&(b+\frac{\sqrt{3}}{3}h-t_{1}-\frac{1}{2}t_{n_{1}})\cdot\frac{\sqrt{3}}{2} t_{n_{2}}\\
		+&(b+\frac{\sqrt{3}}{3}h-t_{1}-\frac{1}{2}t_{n_{1}}-\frac{1}{2}t_{n_{2}})\cdot\frac{\sqrt{3}}{2} t_{n_{3}}\\
		+&\cdots\\
		+&(b+\frac{\sqrt{3}}{3}h-t_{1}-\frac{1}{2}t_{n_{1}}-\frac{1}{2}t_{n_{2}}-\cdots-\frac{1}{2}t_{n_{k-1}})\cdot\frac{\sqrt{3}}{2} t_{n_{k}}
	\end{align*}
	is greater than the area of the right trapezoid with base lengths $b-\frac{1}{2}t_{1}$ and $b+\frac{\sqrt{3}}{3}h-t_{1}$, and with height $h-\frac{\sqrt{3}}{2}t_{1}$, that is, $(b-\frac{3}{4}t_{1}+\frac{\sqrt{3}}{6}h)\cdot(h-\frac{\sqrt{3}}{2}t_{1})$.
	
	Therefore,
	\begin{align*}
		\sum|\Delta_{n}|>\frac{\sqrt{3}}{4}\cdot t^{2}_{1}+(b-\frac{3}{4}t_{1}+\frac{\sqrt{3}}{6}h)\cdot(h-\frac{\sqrt{3}}{2}t_{1}),
	\end{align*}
	which is a contradiction.
\end{proof}

\begin{theorem}
	Let $\Delta$ be an equilateral triangle with a side parallel to a side of $I$, and let $\{\Delta_{n}\}$ be a collection of homothetic copies of $\Delta$. If $\sum|\Delta_{n}|\leq \frac{\sqrt{3}}{4}$, then the equilateral triangles from $\{\Delta_{n}\}$ can be parallel
	packed into $I$. Moreover, $\varrho(I,\Delta)=\frac{\sqrt{3}}{4}$.
\end{theorem}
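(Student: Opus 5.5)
We argue as in Theorem~\ref{thm.1}. Sort so that $t_{1}\geq t_{2}\geq\ldots$, assume $\sum|\Delta_{n}|\leq\frac{\sqrt{3}}{4}$, and suppose the packing fails. Then we place a few of the largest triangles explicitly, reduce the rest to a packing of a rectangle (Lemma~\ref{lem3.1}) or a right trapezoid (Lemma~\ref{lem3.2}), and show that the area lower bound forced by failure is at least $\frac{\sqrt{3}}{4}$. Since $\sum|\Delta_{n}|\leq \frac{\sqrt{3}}{4}$ gives $t_{1}\leq 1$, $\Delta_{1}$ fits in $I$. We use both $\Delta_n$ and its $180^{\circ}$ rotation $\lambda_n\Delta_n$.

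\emph{Case 1: $t_{1}+t_{2}\geq 1$ (large triangles).} Place $\Delta_{1}$ with its base on the bottom side of $I$, touching the left corner. Place $\lambda_{2}\Delta_{2}$ with its base on the top side, touching the right corner. Their slanted sides are parallel, so the two triangles fit when $t_1+t_2$ is not too large. The region they leave, near the other pair of corners, consists of pieces bounded by a $60^{\circ}$ line. We fit a right trapezoid $Z$ with acute angle $60^{\circ}$ into the larger leftover piece, for instance the one above $\Delta_{1}$ with height $1-\frac{\sqrt{3}}{2}t_{1}$, and pack $\Delta_{3},\Delta_{4},\ldots$ into $Z$ using Lemma~\ref{lem3.2}. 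Failure then gives
\begin{align*}
\sum|\Delta_{n}|>\frac{\sqrt{3}}{4}(t_{1}^{2}+t_{2}^{2}+t_{3}^{2})+\Bigl(b-\frac{3}{4}t_{3}+\frac{\sqrt{3}}{6}h\Bigr)\Bigl(h-\frac{\sqrt{3}}{2}t_{3}\Bigr),
\end{align*}
with $b,h$ expressed through $t_{1},t_{2}$. Minimizing over $t_1\ge t_2\ge t_3$ in this range should give at least $\frac{\sqrt{3}}{4}$.

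If $t_{1}$ is so large that $1-\frac{\sqrt3}{2}t_1$ is tiny, we instead observe that $|\Delta_1|$ alone is already close to $\frac{\sqrt3}{4}$. The remaining triangles then have small total area and fit into the strip of width $1-t_1$ beside $\Delta_1$ by Lemma~\ref{lem3.1}.

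\emph{Case 2: $t_{1}+t_{2}<1$.} Put $\Delta_{1}$ and $\Delta_{2}$ side by side on the bottom side of $I$. Put $\Delta_{3}$ (and $\lambda_4\Delta_4$ if useful) in the top row, as in Fig.~\ref{f04}. Then apply Lemma~\ref{lem3.1} to a rectangle of width $1-t_{1}$ or similar and height $1-\frac{\sqrt{3}}{2}(\cdot)$. A simpler alternative is to apply Lemma~\ref{lem3.1} directly to $I$ with $a=h=1$: failure gives $\sum|\Delta_n|>\frac{\sqrt{3}}{4}t_{1}^{2}+(1-t_{1})(1-\frac{\sqrt{3}}{2}t_{1})$. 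This function of $t_{1}$ is a quadratic with leading coefficient $\frac{\sqrt{3}}{4}+\frac{\sqrt{3}}{2}>0$. Its value is $1$ at $t_1=0$ and $\frac{\sqrt{3}}{4}$ at $t_1=1$. Checking its minimum over $[0,1]$, we see it stays at least $\frac{\sqrt{3}}{4}$ on a range of $t_1$ and dips below only for $t_1$ in some interval $(t^*,1)$. So the direct application alone may already settle all $t_1\le t^*$, and Case~1 is only needed for large $t_1$. I would first compute this minimum and locate $t^*$ exactly, then choose the case split at $t_{1}=t^*$ rather than $t_{1}+t_{2}=1$.

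\emph{Tightness.} A single equilateral triangle of side slightly larger than $1$ has area slightly more than $\frac{\sqrt{3}}{4}$ and does not fit in $I$ in parallel position. Hence $\varrho(I,\Delta)=\frac{\sqrt{3}}{4}$.

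The main obstacle is the large-$t_1$ regime. The problem is choosing placements of $\Delta_{1},\Delta_2$ (with rotation) and a trapezoid or rectangle for the rest so that the resulting multivariable lower bound really stays at least $\frac{\sqrt{3}}{4}$. The worst configurations are near $t_{1}=1$, where the slack is only the area lost to the region beside $\Delta_{1}$. I would try first to exploit the two triangular corners beside $\Delta_1$: each has height $\frac{\sqrt3}{2}$ and base $\frac12$, so an upside-down $\lambda_n\Delta_n$ fits there together with a trapezoid.
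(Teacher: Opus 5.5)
Your small-$t_1$ argument is correct and is the paper's Case~3. Lemma~\ref{lem3.1} applied to $I$ itself gives $\sum|\Delta_n|>\frac{\sqrt3}{4}t_1^2+(1-t_1)(1-\frac{\sqrt3}{2}t_1)\geq\frac{\sqrt3}{4}$ whenever $t_1\leq t^*=\frac{4\sqrt3-3}{9}\approx 0.4365$. The paper uses the cruder cut $t_1<\frac25$.

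The gap is the regime $t_1>t^*$, which is where all the real work of the proof lies. There you give only a plan (``should give at least $\frac{\sqrt3}{4}$'', ``or similar''), and the one concrete reduction you propose does not close. Take your Case~1 construction: $\Delta_1$ in the lower left corner, $\lambda_2\Delta_2$ in the upper right corner, and all of $\Delta_3,\Delta_4,\ldots$ sent via Lemma~\ref{lem3.2} into the trapezoid above $\Delta_1$, so $b=1-t_2$ and $h=1-\frac{\sqrt3}{2}t_1$. At $t_1=t_2=t_3=\frac12$, which lies in your Case~1, the failure bound is only $\frac{3\sqrt3}{16}+\frac{\sqrt3}{6}\bigl(1-\frac{\sqrt3}{2}\bigr)=\frac{17\sqrt3}{48}-\frac14\approx 0.363<\frac{\sqrt3}{4}$. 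So no contradiction is reached. The region below $\lambda_2\Delta_2$ and to the right of $\Delta_1$ is never used. The middle range $t^*<t_1$, $t_1+t_2<1$ is not treated concretely at all.

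Your fallback for $t_1$ close to $1$ also breaks. Lemma~\ref{lem3.1} on the strip of width $1-t_1$ requires $t_2<1-t_1$. The area hypothesis only gives $t_2\leq\sqrt{1-t_1^2}$; for example, $t_1=0.9$ allows $t_2$ up to about $0.44$. Even when $t_2<1-t_1$, the strip guarantees at most $(1-t_1-t_2)+\frac{\sqrt3}{4}t_2^2$. Once $t_2$ is comparable to $1-t_1$, this is far below the possible leftover area $\frac{\sqrt3}{4}(1-t_1^2)\approx\frac{\sqrt3}{2}(1-t_1)$.

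What is missing is a verified decomposition for $t_1>t^*$ that exploits the corners cut off by the $60^\circ$ sides. The paper places three or four of the largest triangles explicitly, in both orientations, and sends only $\Delta_4,\ldots$ or $\Delta_5,\ldots$ to a Lemma~\ref{lem3.2} trapezoid.
\begin{itemize}
\item For $\frac{\sqrt3}{3}\leq t_1<1$ it splits according to whether $t_1+\frac12t_2<1$, whether $t_1+t_3<1$, and whether $\Delta_2$ or $\Delta_3$ fits at all. For instance, if $t_2>1+\frac{\sqrt3}{3}-t_1$, then already $|\Delta_1|+|\Delta_2|>\frac14+\frac{\sqrt3}{6}$.
\item For $\frac25\leq t_1<\frac{\sqrt3}{3}$ it splits according to how $t_1+t_3$, $t_3+t_5$ and $t_1+t_4$ compare with $1$.
\end{itemize}
Each subcase ends in an explicit minimization exceeding $\frac{\sqrt3}{4}$. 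Without such a case analysis, the theorem is not proved for $t_1>t^*$.
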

\begin{figure}[!ht]
	\centering
	\includegraphics[width=8cm]{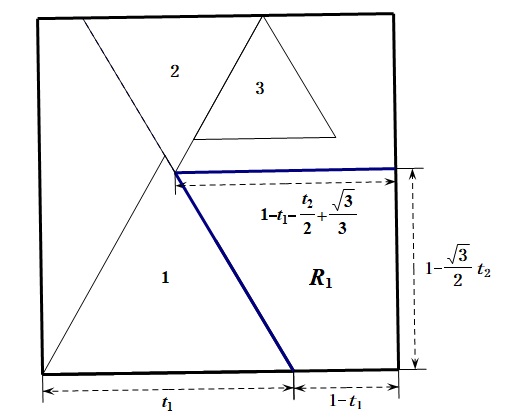}
	\caption{$t_{1}+t_{3}\geq 1$.}
	\label{f010}
\end{figure}
\begin{proof}
	Let $\sum|\Delta_{n}|\leq\frac{\sqrt{3}}{4}$. Denote by $t_{n}$ the side length of $\Delta_{n}$ for $n=1,2,\ldots$. Without loss of generality, we may assume that $t_{1}\geq t_{2} \geq \ldots$.
	
	{\it Case 1.}~$\frac{\sqrt{3}}{3}\leq t_{1}< 1$.
	
	{\it Subcase 1.1.}~$t_{1}+\frac{1}{2}t_{2}< 1$.
	
	{\it Subacse 1.1.1.}~$t_{1}+t_{3}\geq 1$.

	Place the equilateral triangles $\Delta_{1}$, $\Delta_{2}$ and $\Delta_{3}$ as in Fig.~\ref{f010}, the remaining equilateral triangles $\Delta_{4}, \Delta_{5}, \ldots$ are used for packing the right trapezoid $R_{1}$ with base lengths $1-t_{1}$ and $1-t_{1}-\frac{1}{2}t_{2}+\frac{\sqrt{3}}{3}$, and with height $1-\frac{\sqrt{3}}{2}t_{2}$.
	
	By Lemma~\ref{lem3.2} we see that if the equilateral triangles from $\{\Delta_{n}\}$ cannot be parallel packed into $I$, then
	\begin{align*}
		\sum|\Delta_{n}|>\frac{\sqrt{3}}{4}\cdot (t^{2}_{1}+t^{2}_{2}+t_{3}^{2}+t_{4}^{2})+(1-t_{1}-\frac{1}{4}t_{2}-\frac{3}{4}t_{4}+\frac{\sqrt{3}}{6})\cdot(1-\frac{\sqrt{3}}{2}t_{2}-\frac{\sqrt{3}}{2}t_{4}).
	\end{align*}
	
	From $t_{1}+\frac{1}{2}t_{2}<1$, $t_{1}+t_{3}\geq 1$, $t_{1}\geq t_{2}\geq t_{3}\geq t_{4}$ and $\frac{\sqrt{3}}{3}\leq t_{1}< 1$ we know that the lower bound
	is not less than $\frac{\sqrt{3}}{3}-\frac{1}{8}$ (the minimum value is attended for $t_{1}=\frac{3}{4}-\frac{\sqrt{3}}{12}$ and $t_{2}=t_{3}=t_{4}=\frac{1}{4}+\frac{\sqrt{3}}{12}$), which is greater than $\frac{\sqrt{3}}{4}$, a contradiction.
	
	{\it Subcase 1.1.2.}~$t_{1}+t_{3}< 1$.
	
	Place $\Delta_{1}$, $\Delta_{2}$ and $\Delta_{3}$ as in Fig.~\ref{f011}, the remaining equilateral triangles $\Delta_{4}, \Delta_{5}, \ldots$ are used for packing the right trapezoid $R_{2}$ with base lengths $1-t_{1}+\frac{1}{2}t_{2}$ and $1-t_{1}+\frac{\sqrt{3}}{3}$, and with height $1-\frac{\sqrt{3}}{2}t_{2}$.
	
	By Lemma~\ref{lem3.2} we see that if the equilateral triangles from $\{\Delta_{n}\}$ cannot be parallel packed into $I$, then
	\begin{align*}
		\sum|\Delta_{n}|>\frac{\sqrt{3}}{4}\cdot (t^{2}_{1}+t^{2}_{2}+t_{3}^{2}+t_{4}^{2})+(1-t_{1}+\frac{1}{4}t_{2}-\frac{3}{4}t_{4}+\frac{\sqrt{3}}{6})\cdot(1-\frac{\sqrt{3}}{2}t_{2}-\frac{\sqrt{3}}{2}t_{4}).
	\end{align*}
	
	\begin{figure}[!ht]
		\centering
		\includegraphics[width=8cm]{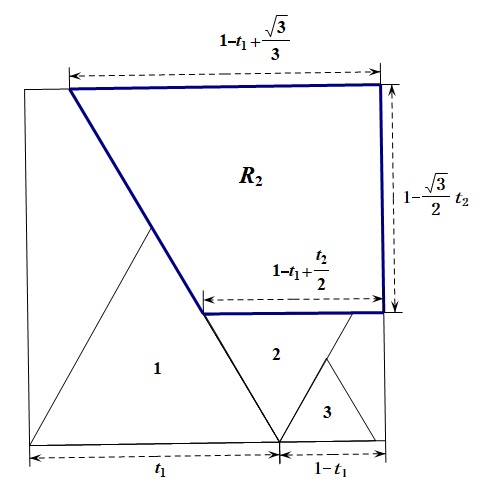}
		\caption{$t_{1}+t_{3}< 1$.}
		\label{f011}
	\end{figure}
	
	From $t_{1}+\frac{1}{2}t_{2}<1$, $t_{1}+t_{3}< 1$, $t_{1}\geq t_{2}\geq t_{3}\geq t_{4}$ and $\frac{\sqrt{3}}{3}\leq t_{1}< 1$ we know that the lower bound
	is not less than $2-\frac{31\sqrt{3}}{36}$ (the minimum value is attended for $t_{1}=\frac{\sqrt{3}}{3}$ and $t_{2}=2-\frac{8\sqrt{3}}{9}$ and $t_{3}=t_{4}=\frac{2\sqrt{3}}{9}$), which is greater than $\frac{\sqrt{3}}{4}$, a contradiction.
	
	{\it Subcase 1.2.}~$t_{1}+\frac{1}{2}t_{2}\geq 1$.
	
	{\it Subcase 1.2.1.~The triangle $\Delta_{2}$ cannot be parallel packed into $I$}.
	
	\begin{figure}[!ht]
		\centering
		\includegraphics[width=6cm]{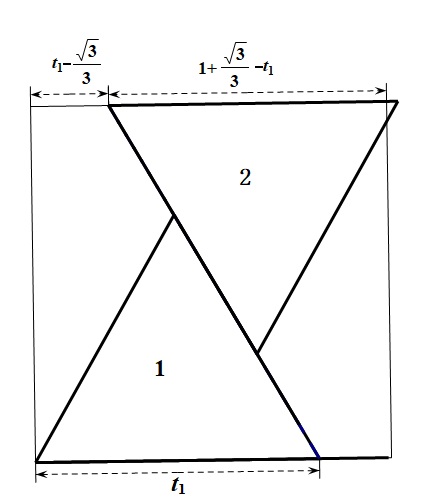}
		\caption{$\Delta_{2}$ cannot be packed into $I$.}
		\label{f012}
	\end{figure}
	
	If $\Delta_{2}$ cannot be parallel packed into $I$, then $t_{2}>1+\frac{\sqrt{3}}{3}-t_{1}$ as in Fig.~\ref{f012}. Therefore,
	
	\begin{align*}
		|\Delta_{1}|+|\Delta_{2}|&=\frac{\sqrt{3}}{4}(t_{1}^{2}+t_{2}^{2})>\frac{\sqrt{3}}{4}(t_{1}^{2}+(1+\frac{\sqrt{3}}{3}-t_{1})^2)\\
		&\geq\frac{1}{4}+\frac{\sqrt{3}}{6}>\frac{\sqrt{3}}{4},
	\end{align*}
	which is a contradiction.
	
	\begin{figure}[!ht]
		\centering
		\includegraphics[width=6cm]{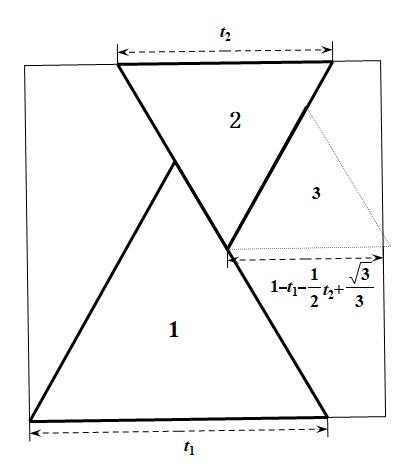}
		\caption{$\Delta_{3}$ cannot be packed into $I$.}
		\label{f013}
	\end{figure}
	
	{\it Subcase 1.2.2.~The triangle $\Delta_{2}$ can be parallel packed into $I$}.
	
	{\it Subcase 1.2.2.1.~The triangle $\Delta_{3}$ cannot be parallel packed into $I$}.
	
	If $\Delta_{2}$ can be parallel packed into $I$ while $\Delta_{3}$ cannot, then $t_{3}>1+\frac{\sqrt{3}}{3}-t_{1}-\frac{1}{2}t_{2}$ as in Fig.~\ref{f013}.
	
	As a consequence,
	\begin{align*}
		|\Delta_{1}|+|\Delta_{2}|+|\Delta_{3}|=\frac{\sqrt{3}}{4}(t_{1}^{2}+t_{2}^{2}+t_{3}^{2})>\frac{\sqrt{3}}{4}t_{1}^{2}
		+\frac{\sqrt{3}}{4}t_{2}^{2}+\frac{\sqrt{3}}{4}(1+\frac{\sqrt{3}}{3}-t_{1}-\frac{1}{2}t_{2})^{2}>\frac{\sqrt{3}}{4},
	\end{align*}
	which is a contradiction.
	
	{\it Subcase 1.2.2.2.~The triangle $\Delta_{3}$ can be parallel packed into $I$}.
	
	Place $\Delta_{1}$, $\Delta_{2}$ and $\Delta_{3}$ as in Fig.~\ref{f014}, the remaining equilateral triangles $\Delta_{4}, \Delta_{5}, \ldots $ are used for packing the right trapezoid $R_{3}$ with base lengths $1-t_{1}$ and $1+\frac{\sqrt{3}}{3}-t_{1}-\frac{1}{2}t_{2}$, and with height $1-\frac{\sqrt{3}}{2}t_{2}$.
	
	\begin{figure}[!ht]
		\centering
		\includegraphics[width=8cm]{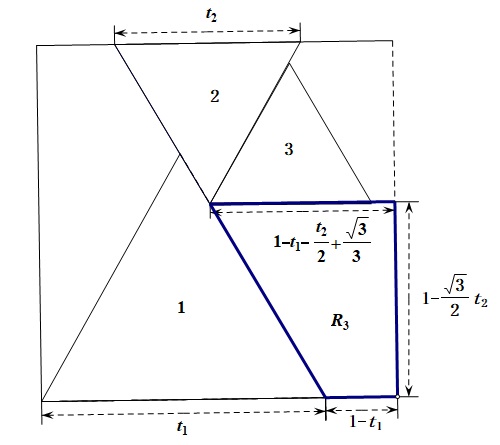}
		\caption{$\Delta_{3}$ can be packed into $I$.}
		\label{f014}
	\end{figure}
	
	By Lemma~\ref{lem3.2} we see that if the equilateral triangles from $\{\Delta_{n}\}$ cannot be parallel packed into $I$, then
	\begin{align*}
		\sum|\Delta_{n}|>\frac{\sqrt{3}}{4}\cdot (t^{2}_{1}+t^{2}_{2}+t_{3}^{2}+t_{4}^{2})+(1-t_{1}-\frac{1}{4}t_{2}-\frac{3}{4}t_{4}+\frac{\sqrt{3}}{6})\cdot(1-\frac{\sqrt{3}}{2}t_{2}-\frac{\sqrt{3}}{2}t_{4}).
	\end{align*}
	
	From $t_{1}+\frac{1}{2}t_{2}\geq 1$, $t_{1}\geq t_{2}\geq t_{3}\geq t_{4}$ and $\frac{\sqrt{3}}{3}\leq t_{1}< 1$ we know that the lower bound
	is not less than $\frac{2}{19}+\frac{25\sqrt{3}}{114}$ (the minimum value is attended for $t_{1}=\frac{36+4\sqrt{3}}{57}$, $t_{2}=\frac{42-8\sqrt{3}}{57}$ and $t_{3}=t_{4}=\frac{4\sqrt{3}-2}{19}$), which is greater than $\frac{\sqrt{3}}{4}$, a contradiction.
	
	{\it Case 2.}~$\frac{2}{5}\leq t_{1}<\frac{\sqrt{3}}{3}$.
	
	{\it Subcase 2.1.}~$t_{1}+t_{3}< 1$.
	
	Place $\Delta_{1}$, $\Delta_{2}$ and $\Delta_{3}$ as in Fig.~\ref{f015}, the remaining equilateral triangles $\Delta_{4}, \Delta_{5}, \ldots $ are used for packing the right trapezoid $R_{4}$ with base lengths $1$ and $1-\frac{\sqrt{3}}{3}+\frac{1}{2}t_{2}$, and with height $1-\frac{\sqrt{3}}{2}t_{2}$.
	
	\begin{figure}[!ht]
		\centering
		\includegraphics[width=8cm]{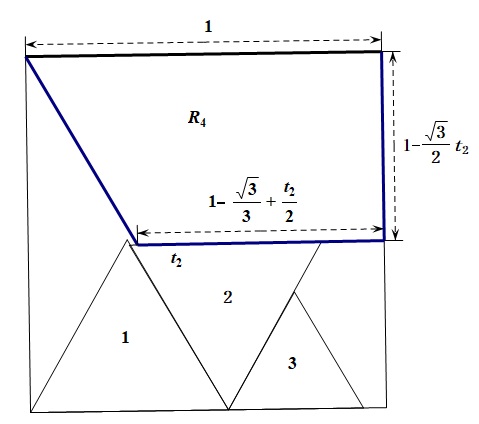}
		\caption{$t_{1}+t_{3}< 1$.}
		\label{f015}
	\end{figure}
	
	By Lemma~\ref{lem3.2} we see that if the equilateral triangles from $\{\Delta_{n}\}$ cannot be parallel packed into $I$, then
	\begin{align*}
		\sum|\Delta_{n}|>\frac{\sqrt{3}}{4}\cdot (t^{2}_{1}+t^{2}_{2}+t_{3}^{2}+t_{4}^{2})+(1+\frac{1}{4}t_{2}-\frac{3}{4}t_{4}-\frac{\sqrt{3}}{6})\cdot(1-\frac{\sqrt{3}}{2}t_{2}-\frac{\sqrt{3}}{2}t_{4}).
	\end{align*}
	
	From $t_{1}+t_{3}<1$, $t_{1}\geq t_{2}\geq t_{3}\geq t_{4}$ and $\frac{2}{5}\leq t_{1}<\frac{\sqrt{3}}{3}$ we know that the lower bound
	is not less than $\frac{38}{35}-\frac{79\sqrt{3}}{210}$ (the minimum value is attended for $t_{1}=t_{2}=\frac{2}{5}$ and $t_{3}=t_{4}=\frac{8}{35}+\frac{2\sqrt{3}}{21}$), which is greater than $\frac{\sqrt{3}}{4}$, a contradiction.
	
	{\it Subcase 2.2.}~$t_{1}+t_{3}\geq 1$.
	
	{\it Subcase 2.2.1.}~$t_{3}+t_{5}\geq 1$.
	
	Since $t_{1}+t_{3}> 1$ and $t_{3}+t_{5}> 1$, from $t_{1}\geq t_{2}\geq t_{3}\geq t_{4}\geq t_{5}$ we have
	\begin{align*}
		\sum|\Delta_{n}|&>\frac{\sqrt{3}}{4}\cdot (t^{2}_{1}+t^{2}_{2}+t_{3}^{2}+t_{4}^{2}+t_{5}^{2})
		\geq\frac{\sqrt{3}}{4}\cdot (3\cdot t_{3}^{2}+2\cdot t_{5}^{2})\\
		&>\frac{\sqrt{3}}{4}(3\cdot t_{3}^{2}+2(1-t_{3})^2)\geq\frac{3\sqrt{3}}{10}>\frac{\sqrt{3}}{4},
	\end{align*}
	which is a contradiction.
	
	{\it Subcase 2.2.2.}~$t_{3}+t_{5}< 1$.
	
	{\it Subcase 2.2.2.1.}~$t_{1}+t_{4}<1$.
	
	Place $\Delta_{1}$, $\Delta_{2}$, $\Delta_{3}$ and $\Delta_{4}$ as in Fig.~\ref{f016}, the remaining equilateral triangles $\Delta_{5}, \Delta_{6}, \ldots $ are used for packing the right trapezoid $R_{5}$ with base lengths $1-t_{3}$ and $1-t_{3}+\frac{\sqrt{3}}{3}-\frac{1}{2}t_{1}$, and with height $1-\frac{\sqrt{3}}{2}t_{1}$.
	
	\begin{figure}[!ht]
		\centering
		\includegraphics[width=8cm]{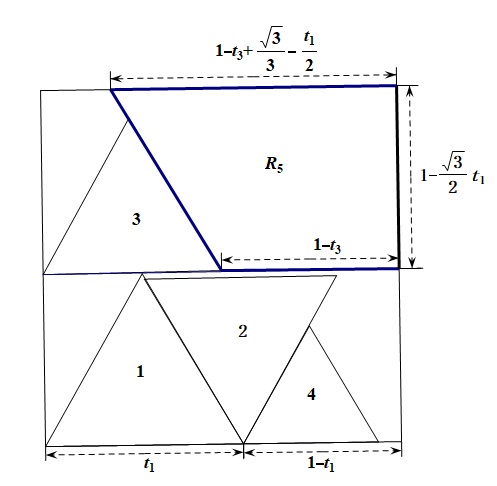}
		\caption{$t_{1}+t_{4}< 1$.}
		\label{f016}
	\end{figure}
	
	By Lemma~\ref{lem3.2} we see that if the equilateral triangles from $\{\Delta_{n}\}$ cannot be parallel packed into $I$, then
	\begin{align*}
		\sum|\Delta_{n}|&>\frac{\sqrt{3}}{4}\cdot (t^{2}_{1}+t^{2}_{2}+t_{3}^{2}+t_{4}^{2}+t_{5}^{2})+(1-t_{3}
		-\frac{1}{4}t_{1}-\frac{3}{4}t_{5}+\frac{\sqrt{3}}{6})\cdot(1-\frac{\sqrt{3}}{2}t_{1}-\frac{\sqrt{3}}{2}t_{5}).
	\end{align*}
	
	From $t_{1}+t_{3}\geq 1$, $t_{1}+t_{4}<1$, $t_{1}\geq t_{2}\geq t_{3}\geq t_{4}$ and $\frac{2}{5}\leq t_{1}<\frac{\sqrt{3}}{3}$ we know that the lower bound
	is not less than $\frac{145\sqrt{3}}{168}-1$ (the minimum value is attended for $t_{1}=\frac{\sqrt{3}}{3}$, $t_{2}=t_{3}=1-\frac{\sqrt{3}}{3}$ and $t_{4}=t_{5}=\frac{4\sqrt{3}}{21}$), which is greater than $\frac{\sqrt{3}}{4}$, a contradiction.
	
	{\it Subcase 2.2.2.2.}~$t_{1}+t_{4}\geq 1$.
	
	Place $\Delta_{1}$, $\Delta_{2}$, $\Delta_{3}$ and $\Delta_{4}$ as in Fig.~\ref{f017}, the remaining equilateral triangles $\Delta_{5}, \Delta_{6}, \ldots $ are used for packing the right trapezoid $R_{6}$ with base lengths $1-t_{3}$ and $1-t_{3}-\frac{\sqrt{3}}{3}+\frac{1}{2}t_{1}$, and with height $1-\frac{\sqrt{3}}{2}t_{1}$.
	
	\begin{figure}[!ht]
		\centering
		\includegraphics[width=8cm]{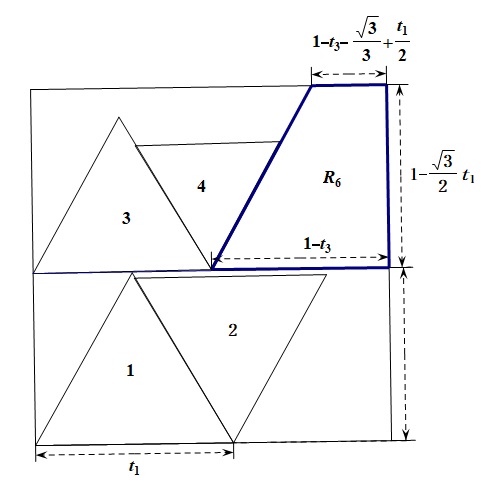}
		\caption{$t_{1}+t_{4}\geq 1$.}
		\label{f017}
	\end{figure}
	
	By Lemma~\ref{lem3.2} we see that if the equilateral triangles from $\{\Delta_{n}\}$ cannot be parallel packed into $I$, then
	\begin{align*}
		\sum|\Delta_{n}|&>\frac{\sqrt{3}}{4}\cdot (t^{2}_{1}+t^{2}_{2}+t_{3}^{2}+t_{4}^{2}+t_{5}^{2})+(1-t_{3}
		+\frac{1}{4}t_{1}-\frac{3}{4}t_{5}-\frac{\sqrt{3}}{6})\cdot(1-\frac{\sqrt{3}}{2}t_{1}-\frac{\sqrt{3}}{2}t_{5}).
	\end{align*}
	
	From $t_{3}+t_{5}< 1$, $t_{1}+t_{4}\geq 1$, $t_{1}\geq t_{2}\geq t_{3}\geq t_{4}$ and $\frac{2}{5}\leq t_{1}<\frac{\sqrt{3}}{3}$ we know that the lower bound
	is not less than $\frac{17\sqrt{3}}{15}-\frac{3}{2}$ (the minimum value is attended for $t_{1}=\frac{\sqrt{3}}{3}$, $t_{2}=t_{3}=t_{4}=1-\frac{\sqrt{3}}{3}$ and $t_{5}=\frac{\sqrt{3}}{5}$), which is greater than $\frac{\sqrt{3}}{4}$, a contradiction.
	
	{\it Case 3.}~$0<t_{1}<\frac{2}{5}$.
	
	By Lemma~\ref{lem3.1} we see that if the equilateral triangles from $\{\Delta_{n}\}$ cannot be parallel packed into $I$, then from $0<t_{1}<\frac{2}{5}$ we get
	
	\begin{align*}
		\sum|\Delta_{n}|>\frac{\sqrt{3}}{4}t_{1}^{2}+(1-t_{1})\cdot(1-\frac{\sqrt{3}}{2}t_{1})\geq\frac{3}{5}-\frac{2\sqrt{3}}{25}>\frac{\sqrt{3}}{4},
	\end{align*}
	which is a contradiction.
	
	Observe that one equilateral triangle with side length greater than $1$ cannot be parallel packed into $I$. As a consequence,
	\begin{align*}
		\varrho(I, \Delta)=\frac{\sqrt{3}}{4}.
	\end{align*}
	
\end{proof}

%\bibliography{sn-bibliography}

\end{document}